            \newtheorem{theorem}{Theorem}[section]
          \newtheorem{lemma}[theorem]{Lemma}
          \newtheorem{corollary}[theorem]{Corollary}
        \newtheorem{remark}[theorem]{Remark}
\begin{document}
          \title[Nonlocal diffusion equation for image denoising and data analysis]{A new nonlocal nonlinear diffusion equation for image denoising and data analysis}

\markleft{G.~Aletti, M.~Moroni, G.~Naldi} 
         
          \author[G.~Aletti]{Giacomo Aletti}

\address{ADAMSS Center, Universit{\`a} degli studi di Milano,
          Via Saldini 50, 20133 Milano, Italy}
          \email{giacomo.aletti@unimi.it}
          
          \author[M.~Moroni]{Monica Moroni}
          \address{IIT Genova, Via Morego, 30,
         16163 Genova, Italy} \email{monica.moroni@iit.it}
         \author[G.~Naldi]{Giovanni Naldi}
         \address{ADAMSS Center, Universit{\`a} degli studi di Milano,
          Via Saldini 50, 20133 Milano, Italy}
          \email{giovanni.naldi@unimi.it}
          
          \thanks{This work was funded by ADAMSS Center, G. Naldi acknowledges the
support of the Hausdorff Research Institute for Mathematics during the Special
Trimester ''Mathematics of Signal Processing''.
G.Aletti and G.Naldi are members of ``Gruppo Nazionale per il Calcolo
  Scientifico (GNCS)'' of the Italian Institute ``Istituto Nazionale
  di Alta Matematica (INdAM)''}



          \begin{abstract}
In this paper we introduce and study a new  feature-preserving nonlinear anisotropic diffusion for denoising signals.
The proposed partial differential equation is based on a novel diffusivity  coefficient that uses a nonlocal
automatically  detected parameter related to the local bounded variation and the local oscillating pattern of the
noisy input signal. We provide a mathematical analysis of the existence of the solution  of our nonlinear and nonlocal
diffusion equation in the two dimensional case (images processing). Finally, we propose a numerical scheme
with some numerical experiments which demonstrate the effectiveness of the new method.
          \end{abstract}
\keywords{Nonlocal operators, nonlinear diffusion,  feature extraction, signal processing}

 \subjclass[2010]{Primary: 35K55; secondary: 35Q68, 92B20}
         \maketitle

\section{Introduction}

Nonlinear partial differential equations (PDEs) can be used in the analysis and processing of digital images or image sequences, for example  to filter out the noise, to produce higher quality image,  to extract features and shapes (see e.g. \cite{AGLM93,AK2006, APT2006,GS2001,Wei98} and the References herein). Perhaps, the main application of PDEs based methods in this field is
smoothing and restoration of images. From the mathematical point of view, the input (grayscale) image can be moelled by a real function
$u_0(x)$, $u_0:\Omega\rightarrow\mathbb{R}$, where $\Omega \subset \mathbb{R}^d$, represents the spatial domain. Typically this domain $\Omega$ is rectangular and $d=1,~2,~3$. The function $u_0$ is considered as an initial data for a suitable evolution equation with
some kind of  boundary conditions (usually homogeneous Neumann boundary conditions).

The simplest  PDE method for smoothing images is to apply a linear diffusion process, the starting point is the simple observation that the so called Gauss function, with $\sigma >0$ and where
$|\cdot |$ is the Euclidean norm,
\[
G_\sigma (x) = \frac{1}{(2\pi \sigma^2 )^{d/2}}\, e^{-|x|^2/(2 \sigma^2 )}
\]
is related to the fundamental solution of the linear diffusion (heat) equation. Then, it has been possible to reinterpret the classical smoothing operation of the convolution of an image with $G_\sigma$, with a given standard deviation $\sigma$, by solving the linear diffusion equation for a corresponding time $t=\sigma^2 /2$ with initial condition given by the original image. For example, when $d=2$, it is a classic result that for any bounded, continuous. and integrable $u_0 (x)$, $x\in\mathbb{R}^2$, the linear diffusion equation on the whole space (here $\triangle$ denotes the Laplacian operator),
\[
\frac{\partial u}{\partial t} = \triangle u,\,\, u(x,0) =u_0(x)
\]
possesses the following solution
\[
u(x,t) =
\left\{
\begin{array}{l}
  u_0(x),\,\,\, t=0 \\
  \\
  (G_{\sqrt{2t}}*u_0)(x),\, t>0
\end{array}
\right.
\]
where the convolution product $(g*f)(x)$  between the function $f$ and $g$, is defined as
\[
(g*f)(x) =\int_{\mathbb{R}^2} g(x-y) f(y) dy.
\]
We point out that for different time (variance) $t$ we obtain different levels of smoothing: this
defines a \textit{scale-space} for the image \cite{Koe84,Wit83}. That is, we get copies of the image at different
scales. Note, of course, that any scale $t$ can be obtained from a scale $\tau$, where $\tau < t$,
as well as from the original images, this is usually denoted as the causality criteria for scale-spaces \cite{AGLM93}.
The solution of the above linear diffusion equation is unique, provided we restrict ourselves to functions satisfying
some suitable grow conditions. Moreover, it depends continuously on the initial image $u_0$, and it fulfils the maximum/minimum principle
\[
\inf_{x\in\mathbb{R}^2} u_0(x) \leq u(x,t) \leq \sup_{x\in\mathbb{R}^2} u_0(x)\,\,on\,\,\mathbb{R}^2\times [0,\infty ).
\]
For application in image processing we also need to consider appropriate boundary conditions: usually homogeneous Neumann conditions are used.

The flow produced by the linear diffusion equation is also denoted as isotropic diffusion, as it is diffusing
the information equally in all directions. Then, the gray values of the initial image will spread, and, in the end, a uniform image, equal to the average of the initial gray values, is obtained. Although this property is good for local reducing noise (averaging is optimal for
additive noise), this filtering operation also destroys the image content, that is, the boundaries of the objects and the subregions present in the image (the \textit{edges}). This means that the Gaussian smoothing  does not
only smooth noise, but also blurs important features and it
makes them harder to identify. Furthermore, linear diffusion filtering dislocates edges when moving from finer to coarser
scales (see e.g. \cite{Wit83}). So structures which are identified at a coarse
scale do not give the right location and have to be traced back to the original
image.  Moreover, some smoothing properties of Gaussian scale-space do not carry over from
the one-dimensional case to higher dimensions: it is generally not true that the
number of local extrema, which are related to edges, is non-increasing.
As suggested by Hummel \cite{Hum86} the linear diffusion  is not the only PDE that can be used to enhance an image and
that, in order to keep the scale-space property, we need only to make sure
that the corresponding flow  holds the maximum principle.  Many approaches have been taken in the literature
to implement this idea replacing the linear equation with a nonlinear PDE
that does not diffuse the image in a uniform way: these flows are normally
denoted as anisotropic diffusion. In particular, the diffusion coefficient is locally adapted, becoming negligible as
object boundaries are approached. Noise is efficiently removed and object contours are strongly enhanced \cite{Wei98}.
There is a vast literature concerning nonlinear anisotropic diffusions with application to image
processing which date back to the seminal paper by Perona and Malik, who,
in \cite{PM90}, consider a discrete version of the following equation
\[\left\{
\begin{array}{lll}
\frac{\partial u}{\partial t} - \nabla\cdot \left(g(|\nabla u|)\nabla u\right) = 0,\,&\text{in }\, &\Omega_T=(0,T)\times \Omega , \\
\\
u(x,0)=u_0(x) &\, \text{on} & \Omega \\
\\
\frac{\partial u}{\partial \vec{n}} (x,t) =0, \, &\text{on} & \Gamma\times (0,T),
\end{array}\right.
\]
where $ \Gamma = \partial\Omega $, the imaage domain $\Omega \subset \mathbb{R}^2$ is an open regular set (typically a rectangle), $\vec{n}$ denotes the unit outer normal to its boundary $\Gamma$,
and $u(x,t)$ denotes the (scalar) image analysed at time (scale) $t$ and point $x$.  The initial condition  $u_0(x)$,
$u_0$ is, as in the linear case, the original image. In order to reduce smoothing at edges, the diffusivity $g$ is chosen as
a decreasing function of the ``edge detector'' $|\nabla u|$ (for a vector $V=(V_1,V_2)\in \mathbb{R}^2$, $|V|^2=V_1^2+V_2^2$).
A typical choice is,
\[
g(s)=\frac{1}{1+\left(s/\lambda\right)^2},\,s\geq 0,\,\lambda >0.
\]
Catt\'{e}, Lions, Morel and Coll \cite{CLMC1992} showed that the continuous Perona-Malik model is ill posed, then
very close pictures can produce divergent solutions and therefore very different
edges. This is caused by the fact that the diffusivity $g$  leads to flux
$s \cdot g(s)$ decreasing for some $s$ and the scheme may work locally like the inverse
heat equation which is known to be ill posed. This possible misbehaviour surely represents a
severe drawback of the Perona-Malik model when applied to data effected by noise.
However, discrete implementations work as a regularization factor by introducing
implicit diffusion into the model, and the filter is usually observed to be stable
(with staircasing effect as the only observable instability). Then, in the continuous settings,
a new model has been proposed \cite{CLMC1992} with the only modification of replacing the gradient $\nabla u$ in the diffusivity
by its  spatial regularizations $(G_\sigma * \nabla u)$, which are obtained by smoothing the argument by a convolution with a $C^\infty$ kernel $G_\sigma$. Typically $G_\sigma$ is a Gaussian function and $\sigma$ determines the scale beyond which regularization
occurs. The equation  will now diffuse if and only if the gradient is estimated to be small. We point out that the spatial regularization lead to processes where the solution converges to a constant steady state. Then, in order to get nontrivial results, we have  to
specify a stopping time $T$. Sometimes it is attempted to circumvent this task by adding an additional reaction term
which keeps the steady state solution close to the original image $u_0$, for example
\[
\frac{\partial u}{\partial t} - \nabla\cdot \left(g(|\nabla G_\sigma * u|)\nabla u\right) = f(u_0-u),
\]
where $f$ is a Lipshitz continuous, non dereasing funtion, $f (0) = 0$.  During the last
years, many other nonlinear parabolic equations have been proposed as an  image analysis
model. The common theme in this proliferation of models is
the following, one attempts to fix one intrinsic diffusion direction and tunes the diffusion
using the size of the gradient or the value of an estimate of the gradient. A few of the proposed models are
even systems of PDEs, for example  there exist reaction diffusion systems which
have been applied to image restoration and which are  connected to Perona-Malik idea or  based on Turing’s pattern formation model \cite{Wei98}.

In this paper we proposed a new anisotropic diffusion equation introducing a nonlocal diffusive coefficient that takes into account of the ``monotonicity'' and the oscillating pattern of the image. In other words, a high modulus of the gradient may lead to a small diffusion if the function is, for instance, locally monotone.
\subsection{A motivating $1D$ model}
\label{sec:1Dmodel}
At present, the best view of the activity of a neural circuit is provided by multiple-electrode
extracellular recording technologies, which allow us to simultaneously measure spike trains
from up to a few hundred neurons in one or more brain areas during each trial. While the
resulting data provide an extensive picture of neural spiking, their use in characterizing the
fine timescale dynamics of a neural circuit is complicated by at least two factors. First,
extracellularly captured action potentials provide only an occasional view of the process
from which they are generated, forcing us to interpolate the evolution of the circuit between
the spikes. Second, the circuit activity may evolve quite differently on different trials that
are otherwise experimentally identical. Experimental measurements are noisy. For neural recordings, the noise may arise from a
multitude of sources, both intrinsic and extrinsic to the nervous system. Operationally,
supposing that recorded data are composed of two parts, signal of interest and other processes
unrelated to the experimental conditions. it is a challenge to preserve the essential signal features, such as suitable structures related to
the neuronal activity, during the smoothing process.
\begin{figure}[tbhp]
\begin{center}
\includegraphics[width=0.90\textwidth]{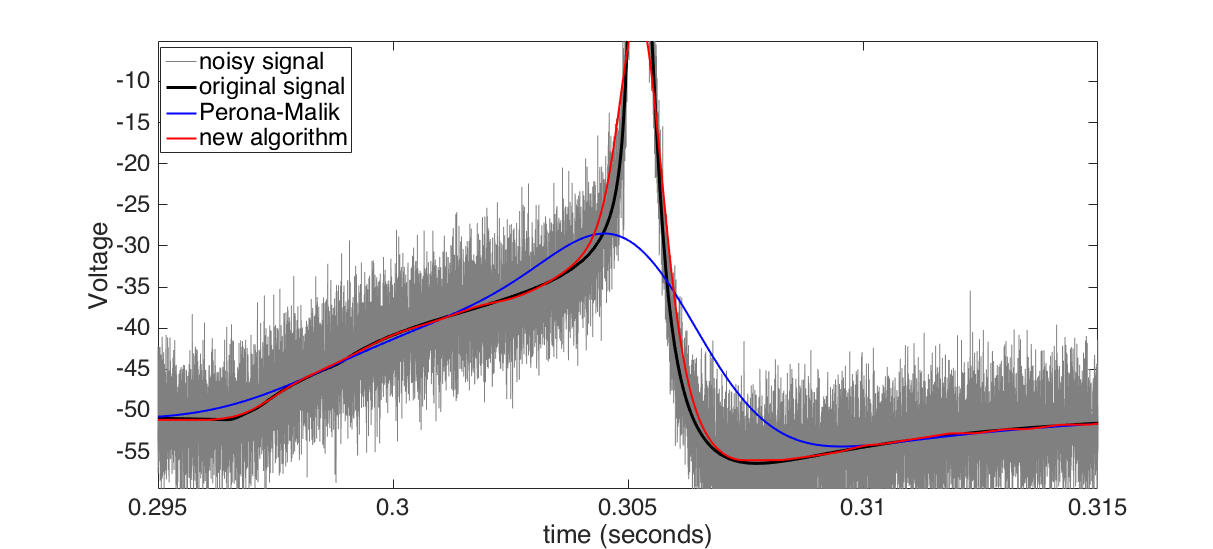}
\end{center}
\caption{$5 s$ simulated membrane potential signals of a network of 20 neurons randomly connected (firing rate $= 5 Hz$). In black, original signal of a neuron.
White gaussian noise was added to the signals (gray).
Blue: signal after denoising (Perona-Malik). Red: signal after denoising (new algorithm).
Note that the event occurred between $t=0.296$
and $t=0.297$ is completely removed after Perona-Malik smoothing}
\label{Fig:1D_smoothing}
\end{figure}
In Figure~\ref{Fig:1D_smoothing}  we show an example of a noised signal of a neuron, where a white gaussian noise
has been superposed to the original signal. The method proposed in this paper is compared with the classical Perona-Malik
algorithm. We point out that the diffusivity in the Perona-Malik model, or similar approaches, depends locally on the modulus
of the gradient of the function. Instead, we introduce a nonlocal diffusive coefficient that takes into account of the ``monotonicity'' of the
signal. In other words, a high modulus of the gradient may lead to a small diffusion if the function is also locally monotone.
Motivated by this fact, we have developed the new approach presented in this paper.
\begin{figure}[tbhp]
\begin{center}
\includegraphics[width=0.95\textwidth]{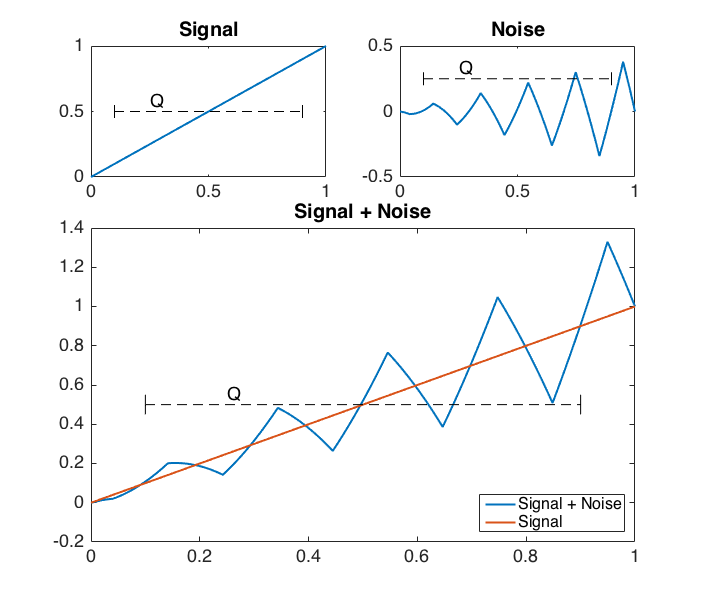}
\end{center}
\caption{Example of a nonlocal signal.
The signal and the noise have the same total variation and the same modulus
of the gradient. The global increment observed on $Q$ is instead very different}
\label{Fig:1D_sign_plus_noise}
\end{figure}
More precisely, the diffusion coefficient in a point $x$ is based on the behavior of the function $f$ in a interval $x+Q=(x-q,x+q)$,
where $Q= (-q,+q)$, see Figure~\ref{Fig:1D_sign_plus_noise}. Analytically, we compute the ratio between the variation $|f(x+q)-f(x-q)|$ and the total variation $\int_{Q} |\nabla f(s+x)| ds $ of the function in $x+Q$. A ratio close to $1$ will imply a tiny noise in the signal, while a ratio close to $0$ is related to a highly noised signal. As shown in Figure~\ref{Fig:1D_sign_plus_noise}, a pure signal and a noised one may have the same total variation and the same modulus of gradient. Therefore, Perona-Malik like methods (and total variation based methods) treat the signals in the same way.\\
More precisely, for the one dimensional spatial case, let  $u:[a,b] \rightarrow\mathbb{R}$ a real function defined on a bounded interval $[a,b]$, and a subinterval
$[c,d] \subset [a,b ]$. We define the \textit{local variation} $LV_{[c,d]}(u)$  of $u$ on the interval $[c,d]$ the value
\[
LV_{[c,d]}(u) = \lvert u(d) - u(c)  \rvert .
\]
We also define the \textit{total local variation} $TV_{[c,d]}(u)$ of $u$ on the interval $[c,d]$  as follows
\[
TV_{[c,d]}(u) = \sup_{\mathcal{P}} \sum_{i=0}^{n_{P}-1} \lvert u ( x_{i+1}) - u( x_{i})  \rvert
\]
where
$\mathcal{P} = \left\lbrace P = \left\lbrace x_{0}, \dots , x_{n_{P}} \right\rbrace | P \text{  is a partition of  } [ a,b ] \right\rbrace $
is the set of all possible finite  partition of the interval $[c,d]$.
It is easly to prove that if the function $u$ is a monotone function on the interval $[c,d]$, then  $LV_{[c,d]}(u)= TV_{[c,d]}(u)$. While, if the function $u$ is not monotone,  $LV_{[c,d]}(u) < TV_{[c,d]}(u)$.
For the $1D$ signal, as the membrane potential of a neuron,  where the independent variable has the dimension of a time,
it is convenient to select instead of a symmetric window $Q$ an  an asymmetric interval of a given length $\delta$.
Let $\varepsilon \in \mathbb{R}^{+}$  ``small'' number, and let $\delta \in \mathbb{R}^{+}$ a positive number,
we define the ratio,
\begin{equation} \label{eq: LVsuTV}
R_{\delta ,u} = \dfrac{LV_{[ x,x+\delta ]}(u)}{\varepsilon + TV_{[x,x+\delta ]}(u)}
\end{equation}
If the parameter $\delta$  is chosen appropriately we can distinguish between oscillations caused by noise and by
electrophysiological stimuli (in the following EPSP) contained in a range of amplitude $\delta$.
In the case of the membrane potential of a neuron the oscillations due to the noise and to EPSP occur on different time scales: it is possible to choose a value $\delta$ such that in a range of amplitude $\delta$ there is at least a full oscillation due to noise, but not to a complete EPSP.
Then, there is an oscillation, the signal is not  monotone and it is expected that the ratio $R_{\delta ,u}$ is much less than one because
$LV_{[ x,x+\delta ]}(u)<< TV_{[x,x+\delta ]}(u)$. While, if in the same time interval there is an EPSP, the ratio $R_{\delta ,u}$
becomes close to one.

As in the Perona-Malik model, we adapt the diffusive coefficient by using the above ratio $R_{\delta ,u}$. For small values of the latter we have
to reduce the noise, while for values close to $1$, the upper bound of $R_{\delta ,u}$, we have to preserve the signal variation (as the edges in the image). The resulting non-local equation is the following,
\begin{equation}\label{eq_nonlocal_1D}
\frac{\partial u}{\partial t} - \frac{\partial}{\partial x} \left( g\left( \frac{LV_{[x,x+\delta ]}(u)}{\varepsilon + TV_{[x, x+\delta ]}(u)} \right)  \frac{\partial u}{\partial x} \right)=0,
\end{equation}
where the function $g$ has the same properties as in the Perona-Malik model and $\delta >0$. If $u$ is a  differentiable function
and $u^\prime$ is integrable, the  total variation cab be written as,
\[
 TV_{[x, x+\delta ]}(u) = \int_x^{x+\delta} |u^\prime (s)| ds,
\]
and the non linear diffusion equation (\ref{eq_nonlocal_1D}) can be state as
\begin{equation*}
\frac{\partial u}{\partial t} - \frac{\partial}{\partial x} \left( g\left( \frac{\left|\int_{x}^{x+\delta} u^\prime (s) ds \right|}{\varepsilon + \int_{x}^{x+\delta} \left| u^\prime (s) \right| ds} \right)  \frac{\partial u}{\partial x} \right) =0.
\end{equation*}
for a function $u(x,t)$, $x\in (a,b)$, $t>0$. As initial condition we take the original signal $u_0$ but with some regularization obtain with a standard smoothing filter, e.g. a Gaussian filter, and we assume homogeneous Neumann condition at the boundary, that is $\partial u/\partial x=0$ for $x=a,b$ and $t>0$.
\subsection{The multidimensional case}
\label{subsec:nD}
In order to apply the new model to a multidimensional signal, in particular in the two-dimensional case (a gray level digital image), we have to generalize the ratio $R_{\delta ,u}$, see (\ref{eq: LVsuTV}). Let $A \subseteq \mathbb{R}^d$ and $u:\Omega \rightarrow \mathbb{R}$ an integrable function smooth function, the total variation $TV(u)$ (or $BV$ seminorm), can be computed as \cite{Zie1989}
\[
TV(u) = \int_A \left|  \nabla u   \right| dx
\]
where $\nabla u$ is the gradient of $u$. Here, we consider the anisotropic total variation,
\[
TV_a(u) = \int_A \left|  \nabla u   \right|_1 dx
\]
considering the $l^1$ norm, $|v|_1=|v_1|+|v_2|+\ldots +|v_d|$, instead of the Eucledian norm.  The usual total variation $TV (u)$ is invariant to rotation of the domain, but anisotropic $TV_a$ is not. However, the latter  allows for other approaches that do not apply with the usual $TV$, for example the  graph-cuts algorithm \cite{CD2009}. Moreover the $TV_a(u)$ has the advantage of making the total variation satisfies the coarea formula \cite{EG1992},
which allows us to interpret $TV_a(u)$ as the cumulated length of the level lines of the function $u$. \\
For the local variation term, the numerator of the ratio $R_{\delta ,u}$, we have to compute the variation of the function $u$ in a region $A$
by taking into account the flux of $u$ at the boundary $\partial A$ of the same set $A$. Following the definition of the $BV-$seminorm \cite{Zie1989}, and the  choice we propose the definition,
\[
LV_A(u)= \sup\{ \int_{A} \nabla u(x) \nabla h(x)\, dx, \ | \nabla h |_1 \leq 1 , h \text{ harmonic on }A\} .
\]
In the above definition, due to the properties of the test function $h$, we have
\[
\int_{A} \nabla u \nabla h\, dx = \int_{\partial A}  u \nabla h \cdot \vec{n}_A\,ds - \int_{A}  u \,div(\nabla h) \, dx
\]
where $div$ is the divergence operator, $\vec{n}_A$ denotes the unit outer normal to $\partial A$, and the last integral is equal to zero because $h$ is harmonic. Then
\[
\int_{A} \nabla u \nabla h\, dx = \int_{\partial A}  u \nabla h \cdot \vec{n}_A\,ds,
\]
and the supremum for the $LV_A(u)$ is taken considering all the possible orientations of the vector $\nabla h$ with respect to $\vec{n}_A$.
Returning to the one-dimensional case, for $A=[c,d]$, we obtain,
\[
LV_A(u) = \sup \{ (u(c)-u(d)),\,(-u(c)+u(d))\} = | u(d)-u(c)|.
\]

The remainder of this paper is organized as follows. In Section \ref{sec:analysis}, we provide the mathematical analysis of the new non-linear and non-local diffusion equation in the two dimensional spatial case. We show in particular the existence of a solution for the model by using a suitable semidiscrete scheme under reasonable hypotheses for applications in image processing.  In Section \ref{sec:NR}, we build an explicit, in time,  numerical scheme for the new model coupling a finite element method based on bilinear element $\mathcal{Q}_1$, a finite difference approximation for the numerical gradients, and a decomposition with respect to a suitable set of eigenfunctions. In Section \ref{sec:NR} we also show some numerical
experiments.
\subsection*{Notation}
In the following, $\Omega\subset \mathbb{R}^2$  denotes a open bounded domain with Lipschitz continuous boundary  $\Gamma = \partial \Omega$, and
$\Omega_T=\Omega \times (0,T)$, with $T>0$.
We denote by $H^k(\Omega )$, $k$ is a positive integer, the Sobolev space  of all function $u$ defined in $\Omega$ such that $u$ and its distributional derivatives of order $k$ all belong to $L^2(\Omega )$. Let $D^s$ the distributional derivatives,  $H^k$ is a Hilbert space for the norm,
\[
\| u \|_k = \| u \|_{H^k} = \left(  \sum_{|s|\leq k} \int_{\Omega} \left| D^s u(x) \right|^2 dx  \right)^{1/2} , \qquad
\| u \|_0 = \| u \|_{L^2} .
\]
Let $V=H^1$, $V^*$ stands for
its dual space. 
We denote by $L^p(0, T; H^k(\Omega ))$ the set of all functions $u$, such that, for almost every $t$ in $(0, T)$, $u(t)$ belong to $H^k(\Omega)$, $L^p(0, T; H^k(\Omega ))$ is a normed space for the norm
\[
\|u \|_{L^p(0, T; H^k(\Omega ))} = \left( \int_{0}^{T}\| u \|^p_k  dt \right)^{1/p}
\]
$p \geq 1$ and $k$ a positive integer.
We denote by $(\cdot , \cdot )$, the scalar product in $L^2(\Omega )$.
\section{Analysis of the new nonlocal and nonlinear equation, $2D$ case}
\label{sec:analysis}

In this section we will consider the two-dimensional spatial case and we will prove  the existence of a variational  solution of the corresponding non-local diffusion equation.  From the discussion in the subsections (\ref{sec:1Dmodel})-(\ref{subsec:nD}), given $U \in L^2 (0, T ; V)$ and $Q=  (-q_1,+q_1) \times (-q_2,+q_2) $ (the local window), we can define the ratio coefficient $R$ as the function
\begin{small}
\[
R_{Q,U}(x,t) =
\begin{cases}
\frac{ \sup\{ \int_{Q} \nabla U (x+y,t) \nabla  h(y)\, dy, \ | \nabla h |_1 \leq 1 , h \text{ harmonic on }Q\} }{ \int_{x+Q}
|\nabla U (y,t)|_1 dy}, & \text{if }\int_{x+Q}
|\nabla U (y,t)|_1 dy >0;
\\
0, & \text{otherwise};
\end{cases}
\]
\end{small}
where $|\cdot|_1$ is the $l^1$ norm in $\mathbb{R}^2$. It is easy to verify that the function $R_{Q,U}(x,t)$ is measurable, and $0\leq R_{Q,U} \leq 1$.
Moreover, note that $\int_{x+Q} |\nabla U (y,t)|_1 dy$ is continuous in $x$ since $U \in L^2 (0, T ; V)$.

Let $g:[0,+\infty) \longrightarrow \mathbb{R}$ be
a Lipschitz continuous  nonincreasing function such that $g(0)=1$, $g(s)>0,\,\forall s\geq 0$, $g(1)=\epsilon >0$.
It follows that
$1 \geq g(R_{Q,U}(x,t)) \geq \epsilon$.

Let $Q$ be the window that is used in the definition of the diffusive coefficient
$R_{Q,u}$. We assume that
\begin{equation}\label{ass:Q-O}\tag{Assumption~1}
\inf_{x\in{{\Omega}}} \frac{|{{\Omega}} \cap \{x+Q/3\}|}{|\{x+Q/3\}|} = q_{{{\Omega}}} >0,
\end{equation}
where if $A$ is a measurable set, let $|A|$ be the Lebesque measure of $A$.

The smoothing process of the image $u_I$ is obtained by the  solution $u(x,t)$ of the following non-linear, non-local diffusion
equation,
\begin{equation}\label{eq.nonlinear}
\begin{aligned}
&
\frac{\partial u}{\partial t }-\mathrm{div} \left( g(R_{Q,u}(x,t)) \nabla u\right) =0, &&\text{in } \Omega_T;
\\
&
\frac{\partial u}{\partial \vec{n}} =0, && \text{on } \Gamma \times (0,T);
\\
& u(x,0)=u_0(x)\in V;
\end{aligned}
\end{equation}
with  homogeneous Neumann boundary conditions for the normal derivative $\frac{\partial u}{\partial \vec{n}}$, and initial data $u_0\in V$ which is a smoother version of the original image $u_I$.
\begin{remark}
We point out that the initial data  is more regular with respect to classical parabolic theory but we need to ensure the well-posedness
of the diffusion coefficients  $ R_{Q,u}$. In the numerical approximation of the equation (\ref{eq.nonlinear}) we obtain a suitable
initial data from the original signal $u_I$  by using a convolutional operator with a Gaussian filter.
\end{remark}
\subsection{Rothe method and a priori estimates}

In order to prove the existence of a solution $u\in L^2 (0, T ; V)\bigcap C^0(0,T ; L^2)$ we consider
the so called  Rothe-type approximation \cite{Kacur85}
of (\ref{eq.nonlinear})  which consists  in using time discretization to approximate the evolution
problem by a sequence of  an elliptic one.
To show the convergence of such a process, a common approach is to follow the following steps:
\begin{enumerate}
  \item  for each approximate problem, prove the existence of a solution, and
derive a-priori estimates satisfied by any solution;
  \item then use compactness arguments to show (up to the extraction of a
subsequence) the existence of a limit;
  \item Finally, prove that the previous limit satisfies the original problem.
\end{enumerate}
Let $0 = t_0 < t_1 < \ldots < t_N = T$ denote the time discretization with $t_{i+1}=t_i+\tau$, where $\tau$ is the time step.
Let $u_i$ be the solution of linear equation,
\begin{equation}\label{eq.linear}
\frac{u_i-u_{i-1}}{\tau}-\mathrm{div} \left( g(R_{Q,u_{i-1}}(x,t)) \nabla u_i \right) =0,
\end{equation}
with $u_0=u(x,0)$, and homogeneous Neumann boundary conditions.  Let $\delta u_i=(u_i-u_{i-1})/\tau$ the backward difference at time $t_i$,
we understand the solution of (\ref{eq.linear}) in the variational sense, i.e., we look for $u_i\in V$, for $i=1,\ldots ,N$ satisfies the identity
\begin{equation}\label{eq.linear.variational}
\left( \delta u_i, v \right) + \left( g(R_{Q,u_{i-1}}) \nabla u_i, \nabla v \right) =0,\,\, \forall v\in V,
\end{equation}
where $u_0\in V$ is given. By introducing the bilinear form $a_{\tau,w}$, on $V\times V$,
\begin{equation*}
a_{\tau,w}=(u,v)+\tau a_w(u,v),\qquad  a_w(u,v)= (g(R_{Q,w}) \nabla u, \nabla v),
\end{equation*}
for a given $w\in V$,  we can rewrite the previous identy as,
\begin{equation}\label{variational-2}
a_{\tau , u_{i-1}} (u_i,v)=(u_{i-1},v),\, \forall v\in V.
\end{equation}
The term $a_w(u,v)$ in (\refname{atau-form}) is weakly coercive, i.e., there exist two constants $c_1 > 0$ and $c_2 > 0$ such that
\begin{equation}\label{coerciva-aw}
a_w(u,u) + c_2 \|u\|_0^2 \geq c_1 \| u \|^2_1,\, \forall u \in V.
\end{equation}
Furthermore, the form $a_{\tau,w}$ is continuous and it verifies
\begin{equation*}
a_{\tau,w}(u,u)\geq \tau c_1 \| u \|^2_1 + (1-\tau c_2) \|u\|_0^2,
\end{equation*}
then it is $V-$elliptic if $\tau c_2 \leq 1$. Under this coercivity condition, the existence and the uniqueness of
$u_i\in V$, $i=1,\ldots , N$ from (\ref{eq.linear.variational}) is guaranteed by the Lax-Milgram Theorem \cite{RT2004}.
Now, we introduce the so-called Rothe function
\begin{equation*}
u^{(N)}(t) =u_{i-1} +(t-t_{i-1}) \delta u_i,\,\,\text{for }t_{i-1} \leq t \leq t_i,\, i=1,\ldots , N
\end{equation*}
which we consider as a linear piecewise approximation of the problem (\ref{eq.nonlinear}). Together with $u^{(N)}$ we consider
the step function
\begin{equation*}
\bar{u}^{(N)}(t) =u_{i},\,\,\text{for }t_{i-1} \leq t \leq t_i,\, i=1,\ldots , N
\end{equation*}
with $\bar{u}^{(N)}(0) =u_{0}$. In the following $C$ denotes the generic positive constant.
\begin{lemma}\label{lemma-stima-ui}
Let $u_i$, $i=1, \ldots , N$,   be the solution of problem   (\ref{variational-2}), then
\begin{equation}\label{stima-ui}
\max_{1\leq i \leq N} \|u_i\|_0 \leq C,
\end{equation}
hold uniformly for $N$, and $\bar{u}^{(N)}(t),~u^{(N)} (t) \in L^\infty (0,T; L^2)$.
\end{lemma}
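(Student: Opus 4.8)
The plan is the classical discrete energy ($L^2$) estimate for the Rothe scheme. First I would take $v = u_i$ as test function in the variational identity \eqref{eq.linear.variational}, which gives
\[
(\delta u_i, u_i) + \bigl(g(R_{Q,u_{i-1}})\nabla u_i, \nabla u_i\bigr) = 0 .
\]
Since $g$ satisfies $1 \geq g(R_{Q,u_{i-1}}(x,t)) \geq \epsilon > 0$ pointwise, the second term equals $\int_{\Omega} g(R_{Q,u_{i-1}})\,|\nabla u_i|^2\,dx \geq \epsilon \|\nabla u_i\|_0^2 \geq 0$, so that $(\delta u_i, u_i) \leq 0$. Writing $\delta u_i = (u_i - u_{i-1})/\tau$ and using the polarization identity $2(u_i - u_{i-1}, u_i) = \|u_i\|_0^2 - \|u_{i-1}\|_0^2 + \|u_i - u_{i-1}\|_0^2$, this rewrites as
\[
\|u_i\|_0^2 + \|u_i - u_{i-1}\|_0^2 \leq \|u_{i-1}\|_0^2 , \qquad i = 1, \dots, N .
\]

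In particular $\|u_i\|_0 \leq \|u_{i-1}\|_0$ for all $i$, so iterating down to $i = 0$ yields $\|u_i\|_0 \leq \|u_0\|_0$ for every $i$ and every $N$. Since the initial datum $u_0 = u(\cdot,0)$ belongs to $V \subset L^2(\Omega)$, the quantity $\|u_0\|_0$ is a finite constant, independent of the time step $\tau$ and hence of $N$; this is precisely \eqref{stima-ui}, with $C = \|u_0\|_0$. For the $L^\infty(0,T;L^2)$ assertions: on each subinterval $[t_{i-1}, t_i]$ the Rothe function is the convex combination $u^{(N)}(t) = \tfrac{t_i - t}{\tau}\, u_{i-1} + \tfrac{t - t_{i-1}}{\tau}\, u_i$, hence $\|u^{(N)}(t)\|_0 \leq \max\{\|u_{i-1}\|_0, \|u_i\|_0\} \leq C$, and for the step function $\|\bar u^{(N)}(t)\|_0 = \|u_i\|_0 \leq C$ directly; taking the essential supremum over $t \in (0,T)$ gives both bounds with the same $C$.

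I do not expect a genuine obstacle in this lemma — it is the simplest of the a priori estimates. The only point worth care is which property of $g$ one uses: the weak coercivity \eqref{coerciva-aw} would still produce a uniform bound via a discrete Gronwall inequality, but at the cost of a factor growing like $e^{CT}$; invoking instead the pointwise lower bound $g \geq \epsilon$ makes the diffusion term unconditionally nonnegative and yields directly the sharp monotone decay $\|u_i\|_0 \leq \|u_{i-1}\|_0$, so that $C$ depends only on the (smoothed) initial image and not on $T$, $\tau$, or $N$ — which is what will be needed for the compactness step later.
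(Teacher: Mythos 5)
Your proof is correct, but it takes a genuinely different (and in fact sharper) route than the paper. The paper tests \eqref{eq.linear.variational} with $v=\tau u_{k+1}$, sums over $k$, invokes only the weak (G{\aa}rding-type) coercivity \eqref{coerciva-aw} of $a_w$, and then closes the estimate with a discrete Gronwall argument, which produces a constant of the form $C_0(1+4\tau c_1)^{T/\tau}$, i.e.\ a bound growing like $e^{cT}$. You instead exploit the pointwise bounds $\epsilon \le g(R_{Q,u_{i-1}})\le 1$, which make the diffusion term $\bigl(g(R_{Q,u_{i-1}})\nabla u_i,\nabla u_i\bigr)$ unconditionally nonnegative, so the single-step identity
\[
\|u_i\|_0^2+\|u_i-u_{i-1}\|_0^2+2\tau\bigl(g(R_{Q,u_{i-1}})\nabla u_i,\nabla u_i\bigr)=\|u_{i-1}\|_0^2
\]
immediately gives the monotone decay $\|u_i\|_0\le\|u_{i-1}\|_0$ and hence $C=\|u_0\|_0$, independent of $T$, $\tau$ and $N$; no Gronwall lemma is needed. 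You even correctly anticipated the Gronwall alternative as the weaker option --- it is precisely what the paper does. Your version buys a $T$-independent constant and, as a free by-product of summing the telescoping identity above, the two estimates $\sum_k\|u_k-u_{k-1}\|_0^2\le C$ and $\tau\sum_k\|\nabla u_k\|_0^2\le C$ of the subsequent lemma; the paper's version is the one to use when the bilinear form is only weakly coercive (e.g.\ with lower-order or sign-indefinite terms), a generality not needed here. The treatment of the $L^\infty(0,T;L^2)$ claim via the convex-combination bound on $u^{(N)}$ is also fine.
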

\begin{proof}
First we test (\ref{eq.linear.variational}) at time $t_{k+1}$ by $v=\tau u_{k+1}$ and sum over $k=0,\ldots , p\geq 1$, we obtain (let us define $a_{u_k}(u,v)=a_k(u,v)$),
\begin{equation*}
\sum_{k=0}^{p} (u_{k+1}-u_k, u_{k+1}) + \tau \sum_{k=0}^{p} a_k(u_{k+1},u_{k+1}) = 0,\qquad
p=1,\ldots (n-1).
\end{equation*}
By using the identity $2(u-v,u)=(u-v,u-v)+(u,u)-(v,v)$, we have
\begin{equation*}
\sum_{k=0}^{p} \|u_{k+1}-u_k\|_0^2 + \|u_{p+1}\|_0^2 - \|u_{0}\|_0^2
+ 2\tau \sum_{k=0}^{p} a_k(u_{k+1},u_{k+1}) = 0,
\end{equation*}
Then, from (\ref{coerciva-aw}),
\begin{equation*}
\sum_{k=0}^{p} \|u_{k+1}-u_k\|_0^2 + \|u_{p+1}\|_0^2 - \|u_{0}\|_0^2
+ 2\tau c_1 \sum_{k=0}^{p} \| u_{k+1}\|_1^2
-2 \tau c_2  \sum_{k=0}^{p}\| u_{k+1}\|_0^2 \leq  0,
\end{equation*}
which can be rewritten as follows
\begin{equation*}
\sum_{k=0}^{p} \|u_{k+1}-u_k\|_0^2
+ \|u_{p+1}\|_0^2  + 2\tau c_1 \sum_{k=0}^{p} \| u_{k+1}\|_1^2 \leq
2 \tau c_2  \sum_{k=0}^{p}\| u_{k+1}\|_0^2 + \|u_{0}\|_0^2 .
\end{equation*}
Let us define
\begin{equation*}
s_p = \sum_{k=0}^{p-1} \|u_{k+1}-u_k\|_0^2 +
\frac{1}{2} \|u_{p+1}\|_0^2  + 2\tau c_1 \sum_{k=0}^{p-1} \| u_{k+1}\|_1^2,
\end{equation*}
It is easy to verify that
\begin{equation*}
s_{p+1} \leq C_0 + 2\tau c_1 \sum_{k=0}^{p} \| u_{k}\|_1^2,
\end{equation*}
where the constant $C_0$ depends only on the initial datum $u_0$. By the  inequality
$2s_p \geq \|u_p\|_0^2$, we obtain
\begin{equation*}
s_{p+1} \leq C_0 + 4\tau c_1 \sum_{k=0}^{p} s_k.
\end{equation*}
Applying the discrete Gronwall lemma we have the following inequalities,
\begin{equation*}
s_{p} \leq C_0 \left( 1 + 4\tau c_1 \right)^{p-1}\leq C_0 \left( 1 + 4\tau c_1 \right)^{N}.
\end{equation*}
The function $\tau \rightarrow \left( 1+ 4\tau c_1 \right)^{T/\tau}$ is bounded for $\tau \in (0,+\infty )$, then, for a suitable constant $\bar{C}$,
\begin{equation}\label{eq.Gro.4}
\max_p \frac{\|u_p\|_0^2}{2} \leq \max_p s_{p} \leq C_0 \bar{C} =  C,
\end{equation}
which leads to the a-priori estimates in \eqref{stima-ui}.
\end{proof}
\begin{lemma}\label{lem.second.es}
The estimates
\begin{equation}\label{eq.estimates.2}
\tau \sum_{k=1}^{N} \| \nabla u_k \|_0^2 \leq C,\qquad
\sum_{k=1}^{N}\|u_k-u_{k-1}\|_0^2 \leq C
\end{equation}
hold uniformly  with respect to $N$.
\end{lemma}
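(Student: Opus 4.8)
The plan is to notice that both estimates are essentially by-products of the computation already carried out in the proof of Lemma~\ref{lemma-stima-ui}, once the uniform $L^2$ bound \eqref{stima-ui} is fed back in. Recall that after testing \eqref{eq.linear.variational} by $v=\tau u_{k+1}$, summing over $k=0,\dots,p$, using the identity $2(u-v,u)=\|u-v\|_0^2+\|u\|_0^2-\|v\|_0^2$, and invoking the weak coercivity \eqref{coerciva-aw}, we obtained for every $p$ with $1\le p\le N-1$
\[
\sum_{k=0}^{p} \|u_{k+1}-u_k\|_0^2 + \|u_{p+1}\|_0^2 + 2\tau c_1 \sum_{k=0}^{p} \| u_{k+1}\|_1^2 \le 2\tau c_2 \sum_{k=0}^{p}\| u_{k+1}\|_0^2 + \|u_{0}\|_0^2 .
\]
Every term on the left-hand side is nonnegative, so the whole strategy is: bound the right-hand side uniformly in $N$, then take $p=N-1$ and read off the two desired sums from the left.

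For the right-hand side I would use Lemma~\ref{lemma-stima-ui}, which gives $\max_{1\le i\le N}\|u_i\|_0\le C$ uniformly in $N$, together with the fact that $\tau N=T$ is fixed:
\[
2\tau c_2 \sum_{k=0}^{p}\| u_{k+1}\|_0^2 \le 2 c_2\, (\tau N)\, \max_{1\le i\le N}\|u_i\|_0^2 \le 2 c_2 T C^2 ,
\]
so that the right-hand side of the displayed inequality is at most $2 c_2 T C^2 + \|u_0\|_0^2$, a constant depending only on the data, not on $N$. Setting $p=N-1$ and discarding the nonnegative term $\|u_N\|_0^2$ then yields simultaneously $\sum_{k=0}^{N-1}\|u_{k+1}-u_k\|_0^2\le C$ and $2\tau c_1\sum_{k=0}^{N-1}\|u_{k+1}\|_1^2\le C$. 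Reindexing $k\mapsto k-1$ in the first sum gives the second estimate of \eqref{eq.estimates.2}; from the second sum, using $\|\nabla u_k\|_0^2\le\|u_k\|_1^2$ and $c_1>0$, we get $\tau\sum_{k=1}^{N}\|\nabla u_k\|_0^2\le C/(2c_1)$, which is the first estimate. All constants so produced are independent of $N$, giving the uniformity.

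There is no genuinely hard step here; the lemma is a corollary of the energy inequality already in hand. The only points deserving care are (i) that the constant invoked from \eqref{stima-ui} is the $N$-uniform one (it is, being the $C_0\bar C$ of the previous proof), and (ii) that after summing the $L^2$ norms the prefactor $\tau$ times the number of terms must be collapsed to the fixed horizon $T$ rather than left as an $N$-dependent quantity — this is the single place where finiteness of $T$ enters. Everything else is summation, nonnegativity of the discarded terms, and reindexing.
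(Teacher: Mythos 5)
Your proof is correct and follows essentially the same route as the paper: both arguments read the two estimates off the energy inequality already established in the proof of Lemma~\ref{lemma-stima-ui}, whose left-hand side contains exactly the sums $\sum_k\|u_{k+1}-u_k\|_0^2$ and $2\tau c_1\sum_k\|u_{k+1}\|_1^2$. The only (immaterial) difference is that the paper cites the post-Gronwall bound $\max_p s_p\leq C$ directly, whereas you re-derive the uniform bound on the right-hand side by feeding back the $L^2$ estimate \eqref{stima-ui} together with $\tau N=T$ --- both are valid and yield the same constants.
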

\begin{proof}
The estimates followed by the upper bound obtained  in the Lemma \ref{lemma-stima-ui}, see (\ref{eq.Gro.4}),
\begin{equation*}
\sum_{k=0}^{p} \|u_{k+1}-u_k\|_0^2 +
\frac{1}{2} \|u_{p+1}\|_0^2  + 2\tau c_1 \sum_{k=0}^{p} \| u_{k+1}\|_1^2 \leq C,
\end{equation*}
and from the properties of the bilinear form $a_k(u,v)$.
\end{proof}
\subsection{Compactness and passage to the limit}

We remember that $\Omega_T=\Omega \times (0,T)$, the estimates (\ref{stima-ui},\ref{eq.estimates.2}) will lead the equicontinuity of  the Rothe approximation
$u^{(N)}$,  together the step function $\overline{u}{(N)}$.
\begin{lemma}\label{lem:prefinale}
 There exists $u\in L^2(0,T; V)$ with $\partial u /\partial t \in L^2(0,T; V^*)$ such that (in the sense of subsequences)
\[
 \begin{aligned}
&u^{(N)} \rightarrow u,\quad \overline{u}^{(N)}\rightarrow u
&& \text{in } L^2(\Omega_T); \\
&
\overline{u}^{(N)}\rightharpoonup u && \text{in } L^2(0,T;V); \\
&
\frac{\partial u^{(N)}}{\partial t} \rightharpoonup \frac{\partial u}{\partial t}
&& \text{in } L^2(0,T;V^*).
\end{aligned}
\]
\end{lemma}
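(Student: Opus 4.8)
The plan is to combine the a priori bounds from Lemmas~\ref{lemma-stima-ui} and~\ref{lem.second.es} with an Aubin--Lions type compactness argument, and then to transfer all convergences from the piecewise linear interpolant $u^{(N)}$ to the step function $\bar u^{(N)}$ by estimating their difference.

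First I would record the uniform bounds in the spaces that matter. On each slab $[t_{i-1},t_i]$ the Rothe function is the convex combination $u^{(N)}(t)=(1-\theta)u_{i-1}+\theta u_i$ with $\theta=(t-t_{i-1})/\tau\in[0,1]$, so $\|u^{(N)}(t)\|_1^2\le 2\|u_{i-1}\|_1^2+2\|u_i\|_1^2$; integrating in $t$ and invoking $\max_k\|u_k\|_0\le C$ together with $\tau\sum_k\|\nabla u_k\|_0^2\le C$ gives $\|u^{(N)}\|_{L^2(0,T;V)}\le C$, and the identical estimate for $\bar u^{(N)}$. For the time derivative, $\partial_t u^{(N)}\equiv\delta u_i$ on $(t_{i-1},t_i)$; taking an arbitrary $v\in V$ in the variational identity~\eqref{eq.linear.variational} and using $0<g(R_{Q,u_{i-1}})\le 1$ yields $|\langle\delta u_i,v\rangle|\le\|\nabla u_i\|_0\,\|v\|_1$, hence $\|\delta u_i\|_{V^*}\le\|\nabla u_i\|_0$ and $\|\partial_t u^{(N)}\|_{L^2(0,T;V^*)}^2=\tau\sum_i\|\delta u_i\|_{V^*}^2\le\tau\sum_i\|\nabla u_i\|_0^2\le C$, uniformly in $N$.

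Next I would invoke compactness. Since $u^{(N)}$ is bounded in $W:=\{w\in L^2(0,T;V):\partial_t w\in L^2(0,T;V^*)\}$ and the embedding $V=H^1(\Omega)\hookrightarrow L^2(\Omega)$ is compact ($\Omega$ bounded with Lipschitz boundary), the Aubin--Lions--Simon lemma yields that $W$ embeds compactly in $L^2(0,T;L^2)=L^2(\Omega_T)$. Passing to a subsequence I obtain $u^{(N)}\to u$ strongly in $L^2(\Omega_T)$, $u^{(N)}\rightharpoonup u$ in $L^2(0,T;V)$, and $\partial_t u^{(N)}\rightharpoonup\chi$ in $L^2(0,T;V^*)$, for some $u\in L^2(0,T;V)$ and $\chi\in L^2(0,T;V^*)$. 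Testing against $\phi(t)\,v$ with $\phi\in C_c^\infty(0,T)$, $v\in V$, and using the weak convergence of $\partial_t u^{(N)}$ on one side and the strong $L^2(\Omega_T)$ convergence of $u^{(N)}$ on the other, I get $\int_0^T\langle\chi,v\rangle\phi\,dt=-\int_0^T(u,v)\phi'\,dt$, i.e. $\chi=\partial_t u$ in the distributional sense, so $u\in W$ as required.

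Finally I would transfer everything to $\bar u^{(N)}$. For $t\in[t_{i-1},t_i]$ one has $\bar u^{(N)}(t)-u^{(N)}(t)=(t_i-t)\,\delta u_i$, so
\[
\|\bar u^{(N)}-u^{(N)}\|_{L^2(\Omega_T)}^2=\sum_{i=1}^N\frac{\tau^3}{3}\,\|\delta u_i\|_0^2=\frac{\tau}{3}\sum_{i=1}^N\|u_i-u_{i-1}\|_0^2\le\frac{\tau}{3}\,C\longrightarrow 0
\]
as $N\to\infty$, by the second estimate in~\eqref{eq.estimates.2}. Hence $\bar u^{(N)}\to u$ strongly in $L^2(\Omega_T)$ too; and since $\bar u^{(N)}$ is bounded in the reflexive space $L^2(0,T;V)$, any weakly convergent subsequence has limit $u$ (the two limits coincide because $L^2(0,T;V)\hookrightarrow L^2(\Omega_T)$ continuously), so $\bar u^{(N)}\rightharpoonup u$ in $L^2(0,T;V)$ along the same subsequence. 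The main obstacle I expect is the $L^2(0,T;V^*)$ bound on $\partial_t u^{(N)}$, which hinges on the uniform bound $g\le1$ on the diffusivity and on the variational formulation, together with the proper invocation of the Aubin--Lions--Simon lemma and the identification of the time derivative of the weak limit; the comparison estimate between $\bar u^{(N)}$ and $u^{(N)}$ is elementary.
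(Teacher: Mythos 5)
Your proof is correct, but the compactness step takes a genuinely different route from the paper. The paper establishes strong $L^2(\Omega_T)$ convergence of the step function $\overline{u}^{(N)}$ directly via the Riesz--Fr\'echet--Kolmogorov criterion: it proves a time-translate estimate $\int_0^{T-s}\|\overline{u}^{(N)}(t+s)-\overline{u}^{(N)}(t)\|_0^2\,dt\le C(s+\tau)$ from the bound $\sum_k\|u_k-u_{k-1}\|_0^2\le C$, and a space-translate estimate of order $|\xi|^2$ from the $L^2(0,T;H^1)$ bound, and only afterwards transfers the convergence from $\overline{u}^{(N)}$ to $u^{(N)}$ via the same difference estimate you use (in the opposite direction). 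You instead bound $\partial_t u^{(N)}$ in $L^2(0,T;V^*)$ first and invoke Aubin--Lions--Simon on the piecewise linear interpolant. Your route is shorter and reuses the dual estimate that both proofs need anyway for the third convergence; it also correctly recognizes that Aubin--Lions cannot be applied to $\overline{u}^{(N)}$ itself (its distributional time derivative is a sum of Dirac masses), which is why you apply it to $u^{(N)}$ and transfer. The paper's route is more elementary and self-contained, avoiding the abstract compactness lemma at the cost of proving two translate estimates. All your individual estimates check out: the $V^*$ bound $\|\delta u_i\|_{V^*}\le\|\nabla u_i\|_0$ from the variational identity and $g\le 1$, the identification of the weak limit of the time derivatives, and the computation $\|\overline{u}^{(N)}-u^{(N)}\|_{L^2(\Omega_T)}^2=\tfrac{\tau}{3}\sum_i\|u_i-u_{i-1}\|_0^2\to 0$ are all sound and consistent with Lemmas~\ref{lemma-stima-ui} and~\ref{lem.second.es}.
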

\begin{proof}
 Let $s\in (0,T)$, we consider the time translate variation of the approximation $\overline{u}^{(N)}(t)$,
\begin{equation*}
 J_s= \int_0^{T-s} \| \overline{u}^{(N)}(t+s) - \overline{u}^{(N)}(t) \|_0^2 dt.
\end{equation*}
There exists an integer $k$ such that $k\tau \leq s \leq (k+1)\tau$, then, by the definition of the step function  $\overline{u}^{(N)}$,
\[
 J_s= \tau \sum_{l=0}^{N-k} \| u_{l+k} - u_l\|_0^2.
\]
From the Lemma \ref{lemma-stima-ui} and \ref{lem.second.es} it follows that
\[
J_s \leq Ck\tau,
\]
for a suitable constant $C$, and the time translate estimate,
\begin{equation}\label{eq.time.trans.est}
  \int_0^{T-s} \| \overline{u}^{(N)}(t+s) - \overline{u}^{(N)}(t) \|_0^2 dt \leq C (s+\tau ).
\end{equation}
By using again the estimates (\ref{stima-ui}), (\ref{eq.estimates.2}), and the definition of the step approximation
$\overline{u}^{(N)}$, it is easy to show that
\begin{equation}\label{eq.est.H1norm.ubar}
\int_0^T \| \overline{u}^{(N)} (t) \|_1^2 dt \leq C_u.
\end{equation}
Given a vector $\xi \in \mathbb{R}^2$, let
$\Omega_\xi=\{  x\in \Omega \colon x + \xi \in\Omega \}$ and
$\Omega_{\xi ,T} = \Omega_{\xi} \times (0,T)$.
From the
inequality (\ref{eq.est.H1norm.ubar}) we have the following space translate estimate,
\begin{equation}\label{eq.space.trans.est}
\int_{\Omega_{\xi ,T}} \| \overline{u}^{(N)} (x +\xi , t) - \overline{u}^{(N)} (x, t) \|_0^2\, dxdt
\leq C_\xi |{\bf \xi} |^2 , \quad \text{for } |\xi| \text{ sufficiently small}.
\end{equation}
Due to the  time and translate estimates, respectively  (\ref{eq.time.trans.est}) and (\ref{eq.space.trans.est}), the set
$\{  \overline{u}^{(N)} \}_N$ is compact in $L^2(\Omega_T)$ because of Kolmogorov's relative compactness
criterion \cite{Brezis2010,McO1996}.
The, we can conclude $\overline{u}^{(N)} \longrightarrow u$ in $L^2 (\Omega_T)$ (and also
pointwise in $\Omega_T$),and $u\in L^2(0,T;V)$.
From the definition of $\overline{u}^{(N)}$ and ${u}^{(N)}$, and from Lemma \ref{lem.second.es} it follows the estimate
\begin{equation*}
\int_0^T \| \overline{u}^{(N)} (t) - {u}^{(N)}(t)\|_0^2 dt \leq \frac{C_d}{N},
\end{equation*}
which holds uniformly with respect to $N$. Then ${u}^{(N)} \longrightarrow u$ in $L^2 (\Omega_T)$. \\
Observing that $\partial  {u}^{(N)}/\partial t = (u_i - u_{i-1})/\tau $, for $t\in (t_{i-1}, t_i)$, we can compute
\[
\Big\| \frac{\partial  {u}^{(N)}}{\partial t} \Big\|_* = \sup_{v\in V,~\| v\|\leq 1}~\left| ((u_i - u_{i-1})/\tau  ,v)  \right|.
\]
Then, the following estimate holds uniformly for $N$,
\begin{equation*}
\int_0^T \Big\| \frac{\partial  {u}^{(N)}}{\partial t} \Big\|_*^2  \, dt \leq C_*,
\end{equation*}
and we can deduce the weak convergence of the time derivative of the Rothe approximations $ {u}^{(N)}$.
\end{proof}

\begin{lemma}\label{lem:StrongConv}
With the notation of Lemma~\ref{lem:prefinale},
\[
u^{(N)} \rightarrow u,\quad \overline{u}^{(N)}\rightarrow u
\qquad \text{in } L^2(0,T;V);
\]
\end{lemma}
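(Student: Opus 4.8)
The plan is to bootstrap the weak convergence $\overline{u}^{(N)}\rightharpoonup u$ in $L^2(0,T;V)$ of Lemma~\ref{lem:prefinale} to a strong one. Since $\overline{u}^{(N)}\to u$ strongly in $L^2(\Omega_T)$ by the same lemma and $\|v\|_{L^2(0,T;V)}^2=\|v\|_{L^2(\Omega_T)}^2+\|\nabla v\|_{L^2(\Omega_T)}^2$, it suffices to prove that $\|\nabla\overline{u}^{(N)}\|_{L^2(\Omega_T)}\to\|\nabla u\|_{L^2(\Omega_T)}$: combined with weak lower semicontinuity of the $L^2$-norm this yields $\nabla\overline{u}^{(N)}\to\nabla u$ in $L^2(\Omega_T)$, hence $\overline{u}^{(N)}\to u$ in $L^2(0,T;V)$. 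Throughout I write $\overline{u}^{(N)}_{-}(\cdot,t)=u_{i-1}$ on $(t_{i-1},t_i]$, so that the diffusivity in \eqref{eq.linear.variational} equals $g(R_{Q,\overline{u}^{(N)}_{-}(\cdot,t)})$ on that interval; from the a priori bounds of Lemmas~\ref{lemma-stima-ui}--\ref{lem.second.es} one gets, exactly as for $\overline{u}^{(N)}$, that $\overline{u}^{(N)}_{-}\rightharpoonup u$ in $L^2(0,T;V)$, $\overline{u}^{(N)}_{-}\to u$ in $L^2(\Omega_T)$, and (along a subsequence) a.e. in $\Omega_T$.

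First I would record a discrete energy identity. Testing \eqref{eq.linear.variational} at time $t_k$ with $v=\tau u_k$, summing over $k=1,\dots,N$ and using $2(u-v,u)=\|u-v\|_0^2+\|u\|_0^2-\|v\|_0^2$ exactly as in the proof of Lemma~\ref{lemma-stima-ui},
\begin{equation*}
\int_0^T\bigl(g(R_{Q,\overline{u}^{(N)}_{-}})\nabla\overline{u}^{(N)},\nabla\overline{u}^{(N)}\bigr)\,dt=\tfrac12\|u_0\|_0^2-\tfrac12\|u_N\|_0^2-\tfrac12\sum_{k=1}^N\|u_k-u_{k-1}\|_0^2 .
\end{equation*}
Because $u^{(N)}\rightharpoonup u$ in $L^2(0,T;V)$ and $\partial_t u^{(N)}\rightharpoonup\partial_t u$ in $L^2(0,T;V^*)$, the continuous embedding of $\{v\in L^2(0,T;V):\partial_t v\in L^2(0,T;V^*)\}$ into $C([0,T];L^2)$ gives $u_N=u^{(N)}(T)\rightharpoonup u(T)$ in $L^2$ and $u(0)=u_0$; dropping the nonnegative jump sum yields
\begin{equation*}
\limsup_{N\to\infty}\int_0^T\bigl(g(R_{Q,\overline{u}^{(N)}_{-}})\nabla\overline{u}^{(N)},\nabla\overline{u}^{(N)}\bigr)\,dt\le\tfrac12\|u_0\|_0^2-\tfrac12\|u(T)\|_0^2 .
\end{equation*}

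The crux — and the step I expect to be by far the hardest — is to pass to the limit in the nonlocal diffusivity: one must show that, along the subsequence above, $R_{Q,\overline{u}^{(N)}_{-}}\to R_{Q,u}$ a.e. in $\Omega_T$, so that (by Lipschitz continuity of $g$ and $0\le R\le1$) $g(R_{Q,\overline{u}^{(N)}_{-}})\to g(R_{Q,u})=:A$ with $\epsilon\le A\le1$, strongly in every $L^p(\Omega_T)$, $p<\infty$. Here the nonlocal, window-averaged character of $R_{Q,\cdot}$ is essential, and the ordinary weak-compactness toolbox is not enough: in the numerator the supremum runs over the family $\{\nabla h:h\text{ harmonic on }Q,\ |\nabla h|_1\le1\}$, which is compact in $L^2(Q)$ (interior estimates for harmonic functions together with dominated convergence), so that the numerator is stable under convergence of the window averages of the gradients, while the denominator $\int_{x+Q}|\nabla\overline{u}^{(N)}_{-}(y,t)|_1\,dy$ must be controlled using that same averaged convergence of gradients together with \eqref{ass:Q-O}, which keeps the average from degenerating near $\partial\Omega$; this is exactly the place where the structure of the model is used. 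Granting $g(R_{Q,\overline{u}^{(N)}_{-}})\to A$ a.e., dominated convergence gives $g(R_{Q,\overline{u}^{(N)}_{-}})\nabla u\to A\nabla u$ in $L^2(\Omega_T)$, and writing $g(R_{Q,\overline{u}^{(N)}_{-}})\nabla\overline{u}^{(N)}=g(R_{Q,\overline{u}^{(N)}_{-}})\nabla u+g(R_{Q,\overline{u}^{(N)}_{-}})\nabla(\overline{u}^{(N)}-u)$ and using $\nabla(\overline{u}^{(N)}-u)\rightharpoonup0$ in $L^2(\Omega_T)$, one gets $g(R_{Q,\overline{u}^{(N)}_{-}})\nabla\overline{u}^{(N)}\rightharpoonup A\nabla u$ in $L^2(\Omega_T)$; passing to the limit in \eqref{eq.linear.variational} then identifies the limit equation $\partial_t u-\mathrm{div}(A\nabla u)=0$ weakly, and testing it with $u$ gives $\int_0^T(A\nabla u,\nabla u)\,dt=\tfrac12\|u_0\|_0^2-\tfrac12\|u(T)\|_0^2$.

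Finally I would conclude by expanding the nonnegative quantity $\int_0^T(g(R_{Q,\overline{u}^{(N)}_{-}})\nabla(\overline{u}^{(N)}-u),\nabla(\overline{u}^{(N)}-u))\,dt$ into three terms: the first one has $\limsup\le\tfrac12\|u_0\|_0^2-\tfrac12\|u(T)\|_0^2=\int_0^T(A\nabla u,\nabla u)\,dt$ by the energy bound above; the mixed term $-2\int_0^T(g(R_{Q,\overline{u}^{(N)}_{-}})\nabla\overline{u}^{(N)},\nabla u)\,dt$ converges to $-2\int_0^T(A\nabla u,\nabla u)\,dt$ by weak--strong convergence; and the last term $\int_0^T(g(R_{Q,\overline{u}^{(N)}_{-}})\nabla u,\nabla u)\,dt$ converges to $\int_0^T(A\nabla u,\nabla u)\,dt$ by dominated convergence. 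Hence the left-hand side tends to $0$, and since $g(R_{Q,\overline{u}^{(N)}_{-}})\ge\epsilon$ pointwise this forces $\|\nabla(\overline{u}^{(N)}-u)\|_{L^2(\Omega_T)}\to0$, which is the claim for $\overline{u}^{(N)}$. The analogues for $\overline{u}^{(N)}_{-}$ and $u^{(N)}$ then follow immediately, the first because $\overline{u}^{(N)}_{-}$ is a time-translate of $\overline{u}^{(N)}$ by $\tau\to0$ (continuity of translations in $L^2(0,T;V)$), the second because on each subinterval $\nabla u^{(N)}$ is a convex combination of $\nabla\overline{u}^{(N)}_{-}$ and $\nabla\overline{u}^{(N)}$.
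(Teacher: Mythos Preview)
Your approach has a genuine gap at precisely the step you flag as ``the crux.'' You propose to obtain $R_{Q,\overline{u}^{(N)}_{-}}\to R_{Q,u}$ a.e.\ from the information available at this point, namely $\overline{u}^{(N)}_{-}\to u$ strongly in $L^2(\Omega_T)$ and $\nabla\overline{u}^{(N)}_{-}\rightharpoonup\nabla u$ weakly in $L^2(\Omega_T)$. Your treatment of the numerator is plausible (linear functionals of $\nabla U_n$ over the window pass to the limit under weak convergence, and one can try to make this uniform in $h$), but the denominator
\[
D_{U_n}(x,t)=\int_{x+Q}\bigl|\nabla U_n(y,t)\bigr|_1\,dy
\]
does \emph{not} pass to the limit: the map $v\mapsto\int_{x+Q}|v|_1\,dy$ is only weakly lower semicontinuous, not weakly continuous. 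A one-dimensional obstruction is $U_n(y)=n^{-1}\sin(ny)$: here $U_n\to0$ strongly in $L^2$, $U_n'\rightharpoonup0$ weakly in $L^2$, yet $\int_{x+Q}|U_n'|\,dy\to\tfrac{2}{\pi}|Q|\neq0=\int_{x+Q}|U'|\,dy$. Neither \eqref{ass:Q-O} nor the ``window-averaging'' structure helps, because the nonlinearity $|\cdot|_1$ acts on $\nabla U_n(y)$ \emph{before} the $dy$-integral; averaging in $x$ does nothing to repair this. Once $D_{U_n}\not\to D_U$, the ratio $R$ need not converge, $g(R_{Q,\overline{u}^{(N)}_{-}})\not\to A$, and the entire downstream argument (identification of the limit equation, energy identity for $u$, three-term expansion) collapses.

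The paper proceeds differently and avoids this trap. It tests \eqref{eq.linear.variational} with $v=\overline{u}^{(N)}-u$ and uses only the uniform lower bound $g\ge\epsilon$ to extract $\epsilon\int_0^T\|\nabla\overline{u}^{(N)}-\nabla u\|_0^2\,dt$ from the weighted quadratic form; the remaining terms are controlled by the convergences of Lemma~\ref{lem:prefinale}, never invoking convergence of $g(R_{Q,\cdot})$. Stability of the kernel is established only \emph{afterwards}, in Lemma~\ref{lem:continuityR_Q} and Lemma~\ref{lem:a_conv}, and those proofs explicitly use the strong $L^2(0,T;V)$ convergence delivered by the present lemma (see \eqref{eq:disDu2}--\eqref{eq:conv_denomin}). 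In the paper's logical order your argument is therefore circular: you are invoking continuity of $U\mapsto R_{Q,U}$ under $V$-convergence in order to prove the very $V$-convergence on which that continuity rests.
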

\begin{proof}
Now we shall prove the $L^2(0,T;V)$ convergence of  $\overline{u}^{(N)}$ to $u$ (which  belongs to the space $L^2(0,T;V)$).
So, let us test  (\ref{eq.linear.variational}) by $v=\overline{u}^{(N)} -u$ and integrate it over the time interval $(0,T)$ by
using the partition from the subinterval $(t_{i-1},t_i)$,
\begin{equation}\label{eq.sum.1}
\sum_i^N  \int_{t_{i-1}}^{t_i} \left[ \left( (u_i-u_{i-1})/\tau , u_i -u  \right) + \left( g(R_{Q,u_{i-1}}) \nabla u_i, \nabla u_i -\nabla u
\right)\right] dt =0.
\end{equation}
We recall that $ 1 \geq g(R_{Q,u_{i-1}}) \geq g(1)=\epsilon$, then we obtain
\[
\epsilon \int_0^T \| \nabla  \overline{u}^{(N)} - \nabla u \|_0^2 dt \leq \sum_i^N  \int_{t_{i-1}}^{t_i} \left( g(R_{Q,u_{i-1}})
\nabla u_i- \nabla u , \nabla u_i -\nabla u  \right) dt.
\]
From (\ref{eq.sum.1}), and the above inequality, we have
\begin{multline*}
\epsilon \int_0^T \| \nabla  \overline{u}^{(N)} - \nabla u \|_0^2\, dt + \sum_i^N
\int_{t_{i-1}}^{t_i} \left( g(R_{Q,u_{i-1}})
\nabla u_i, \nabla u_i -\nabla u  \right) dt \\
\leq
\sum_i^N  \int_{t_{i-1}}^{t_i} \left( (u_i-u_{i-1})/\tau , u -u_i  \right) dt.
\end{multline*}
Now, from the Lemma \ref{lemma-stima-ui} and the Lemma \ref{lem.second.es}, and the convergence $\overline{u}^{(N)}\longrightarrow u$ in $L^2(\Omega_T)$,
and the weak convergence $\partial \overline{u}^{(N)} /\partial t \rightharpoonup \partial u /\partial t$ in the space
$L^2(0,T;V^*)$, we deduce that exists a vanishing sequence $\{ C_N \},~C_N\in\mathbb{R},~\lim_{N\rightarrow\infty}C_N =0$ such that
\[
\epsilon \int_0^T | \nabla  \overline{u}^{(N)} - \nabla u |^2 dt \leq C_N,
\]
which implies $ \overline{u}^{(N)} \rightarrow u$ in $L^2(0,T;V)$. To prove the convergence ${u}^{(N)} \rightarrow u$ in $L^2(0,T;V)$ it is possible
to consider a time average approximation starting from the values $u_i$ and proceeding as in \cite{KM1995}.
\end{proof}
\subsection{Existence of a variational solution}

In order to prove that the limit $u$ is a variational solution of (\ref{eq.nonlinear}) we have to consider
the property of the stability of the kernel $g(R_{Q,u})$ with respect a variation in the space $V$.
First, we introduce some results from the measure theory.

 \begin{lemma}[Vitali covering]
Let $\cup_{i=1}^n x_i+Q/3$ be a finite cover of a set $\tilde{{{\Omega}}}\subseteq {{{\Omega}}}$.
Then there exists
a finite sub-cover $\cup_{j=1}^m \{x_j+Q\}$ such that $\{x_j+Q/3,j=1,\ldots,m\}$ are disjoint.
\end{lemma}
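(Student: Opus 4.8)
\emph{Proof plan.} The plan is to run the classical finite Vitali selection argument adapted to translates of the fixed centrally symmetric box $Q=(-q_1,q_1)\times(-q_2,q_2)$. First I would use finiteness of $\{1,\dots,n\}$ to choose a subset $J\subseteq\{1,\dots,n\}$ that is \emph{maximal} with respect to the property that the shrunken boxes $\{x_j+Q/3:j\in J\}$ are pairwise disjoint (take, e.g., a $J$ of largest cardinality among all such subsets, or build one greedily). The point of maximality is that it forces, for every $i\in\{1,\dots,n\}$, the box $x_i+Q/3$ to meet $x_j+Q/3$ for some $j\in J$: if $i\in J$ this is trivial, and if $i\notin J$ and $x_i+Q/3$ were disjoint from every $x_j+Q/3$ with $j\in J$, then $J\cup\{i\}$ would again be a collection of pairwise disjoint boxes, contradicting maximality.

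The only geometric input I would need is the factor-$3$ inclusion: if $(x_i+Q/3)\cap(x_j+Q/3)\neq\emptyset$, then $x_i+Q/3\subseteq x_j+Q$. Because $Q$ is a product of open intervals symmetric about the origin, this reduces to a one–coordinate check. Writing $x_i=(x_i^1,x_i^2)$, in coordinate $\ell\in\{1,2\}$ the intervals $(x_i^\ell-q_\ell/3,\,x_i^\ell+q_\ell/3)$ and $(x_j^\ell-q_\ell/3,\,x_j^\ell+q_\ell/3)$ overlap only if $|x_i^\ell-x_j^\ell|<2q_\ell/3$, and in that case $x_j^\ell-q_\ell< x_i^\ell-q_\ell/3$ and $x_i^\ell+q_\ell/3< x_j^\ell+q_\ell$, so $(x_i^\ell-q_\ell/3,\,x_i^\ell+q_\ell/3)\subseteq(x_j^\ell-q_\ell,\,x_j^\ell+q_\ell)$; taking the product over $\ell=1,2$ gives the claim.

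Combining the two observations: for every $i$ there is a $j\in J$ with $x_i+Q/3\subseteq x_j+Q$, hence
\[
\tilde{\Omega}\ \subseteq\ \bigcup_{i=1}^{n}\bigl(x_i+Q/3\bigr)\ \subseteq\ \bigcup_{j\in J}\bigl(x_j+Q\bigr).
\]
Relabelling the elements of $J$ as $1,\dots,m$, the family $\{x_j+Q:j=1,\dots,m\}$ is then the required finite sub-cover of $\tilde{\Omega}$, and by the very choice of $J$ the boxes $\{x_j+Q/3:j=1,\dots,m\}$ are pairwise disjoint.

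I do not expect any genuine obstacle here: this is the standard Vitali covering argument in the especially simple setting of a finite family of translates of one fixed box. The only place that calls for (elementary) care is the inclusion $x_i+Q/3\subseteq x_j+Q$ under overlap of the one-third boxes, and it is precisely to make this inclusion valid (with the dilation factor $3$) that the hypothesis is stated in terms of the windows $Q/3$ rather than $Q$.
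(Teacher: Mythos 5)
Your proof is correct, and it is the standard finite Vitali selection argument: take a maximal pairwise-disjoint subfamily of the shrunken boxes, and use the factor-$3$ dilation inclusion (which you verify coordinatewise for the rectangle $Q$) to show the enlarged boxes over that subfamily still cover. The paper states this lemma without proof, treating it as the classical Vitali covering lemma, so your argument supplies exactly the omitted justification; your one-coordinate overlap computation giving $|x_i^\ell-x_j^\ell|<2q_\ell/3$ and hence $x_i+Q/3\subseteq x_j+Q$ is the only point needing care, and it is handled correctly.
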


\begin{corollary}\label{cor:uniform_N0}
Let $\tilde{{{\Omega}}}\subseteq {{{\Omega}}}$. Then there exists $x_{0}, \ldots , x_{N_0}\in \tilde{{{\Omega}}}$ such that
$\tilde{{{\Omega}}}\subseteq \cup_{j=1}^{N_0} \{x_j+Q\} $ and
$N_0 \leq \frac{3^2 |{{\Omega}}|}{|Q| q_{{\Omega}}} $.
\end{corollary}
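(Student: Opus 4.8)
The plan is to combine the Vitali covering lemma with the uniform lower‑density bound of Assumption~1 and the countable additivity of Lebesgue measure, the explicit constant coming out of a packing estimate.

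First I would note that, since $Q/3=(-q_1/3,q_1/3)\times(-q_2/3,q_2/3)$ is an open box containing the origin, every $x\in\tilde{\Omega}$ lies in $x+Q/3$; hence the family $\{x+Q/3:x\in\tilde{\Omega}\}$ is an open cover of $\tilde{\Omega}$. Because $\Omega$, and therefore $\tilde{\Omega}$, is bounded and all the boxes are translates of the one fixed box $Q/3$ of positive measure, this cover reduces to a finite subcover $\bigcup_{i=1}^{n}(x_i+Q/3)\supseteq\tilde{\Omega}$ with $x_i\in\tilde{\Omega}$ (alternatively one extracts a countable subcover by Lindel\"of's theorem; the counting below will then show \emph{a posteriori} that finitely many boxes already suffice).

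Next I would apply the Vitali covering lemma to this finite cover, obtaining after relabelling centres $x_1,\dots,x_m\in\tilde{\Omega}$ such that the boxes $\{x_j+Q/3:j=1,\dots,m\}$ are pairwise disjoint while $\tilde{\Omega}\subseteq\bigcup_{j=1}^{m}(x_j+Q)$; here the enlargement of each selected box from $Q/3$ to $Q$ by the factor $3$ is exactly what recovers all of $\tilde{\Omega}$, and it is legitimate precisely because every box is a translate of the same $Q/3$ (if $x+Q/3$ meets $y+Q/3$ then $y\in x+\tfrac23 Q$, so $y+Q/3\subseteq x+Q$). To finish it remains to bound $m$: each $x_j$ lies in $\tilde{\Omega}\subseteq\Omega$, so Assumption~1 gives $|\Omega\cap(x_j+Q/3)|\ge q_{\Omega}\,|Q/3|=q_{\Omega}|Q|/3^{2}$ (using $|Q/3|=|Q|/3^{2}$ in $\mathbb{R}^2$), and since the $x_j+Q/3$ are disjoint the sets $\Omega\cap(x_j+Q/3)$ are disjoint subsets of $\Omega$, whence $|\Omega|\ge\sum_{j=1}^{m}|\Omega\cap(x_j+Q/3)|\ge m\,q_{\Omega}|Q|/3^{2}$. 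Thus $m\le 3^{2}|\Omega|/(|Q|q_{\Omega})$, and taking $N_0=m$ with the centres $x_1,\dots,x_{N_0}\in\tilde{\Omega}$ concludes the proof.

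I expect the only subtle points to be the mechanics of passing to a finite (or countable) subcover when $\tilde{\Omega}$ need not be compact, and the correct bookkeeping of the $Q/3\to Q$ enlargement in the Vitali step; the final measure count is immediate and is what produces the explicit constant $3^{2}/(|Q|q_{\Omega})$ — and, as noted, it automatically forces finiteness of the selected disjoint family even if one had started from only a countable subcover.
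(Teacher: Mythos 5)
Your proposal is correct and follows essentially the same route as the paper: cover $\tilde{\Omega}$ by translates $x+Q/3$, extract a disjoint subfamily via the Vitali covering lemma whose $3$-fold enlargements $x_j+Q$ still cover, and bound the number of disjoint boxes using \eqref{ass:Q-O} together with $|Q/3|=|Q|/3^2$. The only loose point is the claim that boundedness alone yields a finite subcover (the paper instead passes to the compact closure of $\tilde{\Omega}$), but your Lindel\"of-plus-a-posteriori-finiteness remark repairs this.
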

\begin{proof}
Denote by $\hat{{{\Omega}}}$ the closure of $\tilde{{{\Omega}}}$ in ${{\Omega}}$. Take the open cover of $\hat{{{\Omega}}}$ made by
$\mathcal{C}=\{ x + Q/3, x\in \tilde{{{\Omega}}}\}$.
Since ${\overline{{\Omega}}}$ is compact, then $\hat{{{\Omega}}}$ is compact and hence there exists a finite cover of $\hat{{{\Omega}}}$ made
by $\{\cup_{i=1}^n \{x_i+Q/3\}\}$. By the Vitali covering Lemma, there exists
a finite sub-cover $\cup_{j=1}^{N_0} \{x_j+Q\}$ such that $\{x_j+Q/3,j=1,\ldots,N_0\}$ are disjoint. Moreover,
\[
|{{\Omega}} | \geq |{{\Omega}} \cap (\cup_{j=1}^{N_0} \{x_j+Q/3\})| = \sum_{j=1}^{N_0} |{{\Omega}} \cap\{x_j+Q/3\})| \geq
q_{{\Omega}} N_0 \frac{|Q|}{3^2},
\]
the last inequality being a consequence of \eqref{ass:Q-O}, and
since $|\{x_j+Q/3\}|= |Q/3|= |Q| / 3^2$.
\end{proof}

The following result shows the stability of the kernel $g(R_{Q,u})$ when
the limiting function is not locally constant.
\begin{lemma}\label{lem:continuityR_Q}
The function $R_{Q,U}(x,t) $
is continuous at $U\in V$ on the set
\[
\Big\{(x,t)\colon \int_{x+Q}  |\nabla U (y,t)|_1 dy > 0 \Big\} .
\]
\end{lemma}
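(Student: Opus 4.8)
The plan is to show that if $U_n \to U$ in $V = H^1(\Omega)$ and $(x_n,t_n) \to (x,t)$ with $\int_{x+Q}|\nabla U(y,t)|_1\,dy > 0$, then $R_{Q,U_n}(x_n,t_n) \to R_{Q,U}(x,t)$. Since $R_{Q,U}$ is a quotient, I would treat the numerator and denominator separately and then combine, the point being that on the indicated set the denominator is bounded away from zero near $(x,t)$, so the quotient is a continuous function of the two pieces. Write $N(U,x) = \sup\{\int_Q \nabla U(x+y)\nabla h(y)\,dy : |\nabla h|_1 \le 1,\ h \text{ harmonic on }Q\}$ for the numerator and $D(U,x) = \int_{x+Q}|\nabla U(y)|_1\,dy$ for the denominator (suppressing $t$). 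I would first observe that $D$ is jointly continuous in $(U,x)$: continuity in $x$ for fixed $U\in V$ is already noted in the excerpt (it follows from continuity of translation in $L^1$ applied to $|\nabla U|_1 \in L^1(\Omega)$), and continuity in $U$ is immediate from $|D(U,x)-D(W,x)| \le \int_{x+Q}|\nabla U - \nabla W|_1 \le C\|U-W\|_1$; a standard $\varepsilon/2$ argument gives joint continuity. Hence there is a neighborhood of $(x,t)$ and an $N_0$ on which $D \ge \tfrac12 D(U,x) =: \eta > 0$.

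Next I would handle the numerator. The key structural fact is that $N(U,x)$ is a supremum of linear functionals of $\nabla U$, so $|N(U,x) - N(W,x)| \le \sup_h |\int_Q (\nabla U - \nabla W)(x+y)\nabla h(y)\,dy| \le \||\nabla h|_\infty\|$-type bound times $\|\nabla U - \nabla W\|_{L^1(x+Q)}$; since each admissible $h$ satisfies $|\nabla h|_1 \le 1$ pointwise, in particular $|\nabla h|_\infty \le 1$, this is $\le \|\nabla U - \nabla W\|_{L^1(x+Q)} \le C\|U - W\|_1$. So $U \mapsto N(U,\cdot)$ is (uniformly in $x$) Lipschitz. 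For continuity of $N(U,\cdot)$ in $x$ for fixed $U$, I would again use $|N(U,x) - N(U,x')| \le \sup_h |\int_Q (\nabla U(x+y) - \nabla U(x'+y))\nabla h(y)\,dy| \le \|\nabla U(x+\cdot) - \nabla U(x'+\cdot)\|_{L^1(Q)}$, which tends to $0$ as $x' \to x$ by continuity of translation in $L^1$. Combining the two gives joint continuity of $N$ in $(U,x)$ by the same $\varepsilon/2$ argument.

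Finally I would assemble the estimate: on the neighborhood where $D \ge \eta$,
\[
\big| R_{Q,U_n}(x_n,t_n) - R_{Q,U}(x,t)\big|
= \left| \frac{N(U_n,x_n)}{D(U_n,x_n)} - \frac{N(U,x)}{D(U,x)} \right|
\le \frac{|N(U_n,x_n) - N(U,x)|}{\eta} + \frac{N(U,x)\,|D(U_n,x_n) - D(U,x)|}{\eta^2},
\]
using $N(U_n,x_n) \le D(U_n,x_n)$ (so the numerator is bounded by the denominator, keeping things finite) together with $D(U_n,x_n) \ge \eta$. Both terms on the right go to zero by the joint-continuity statements for $N$ and $D$, which proves the claim. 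The main obstacle, and the step I would be most careful about, is the joint continuity of the denominator $D$ in $(U,x)$ — specifically that continuity of $x \mapsto \int_{x+Q}|\nabla U|_1$ is genuinely uniform enough to combine with the Lipschitz-in-$U$ bound; this rests on the fact that $\nabla U \in L^1_{loc}$ and that translation is continuous in $L^1$, and one must be slightly careful near $\partial\Omega$ (which is where Assumption~1 and the earlier remark that the integral is continuous in $x$ are doing their work).
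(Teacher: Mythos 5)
Your proposal is correct and follows essentially the same route as the paper: split $R_{Q,U}$ into numerator $N$ and denominator $D$, use the observation that $|\nabla h|_1\le 1$ forces $|\nabla h|_\infty\le 1$ to get $|N(U,x)-N(W,x)|\le \int_{x+Q}|\nabla U-\nabla W|_1\,dy$, control the denominator by Cauchy--Schwarz from the $H^1$ convergence, and conclude since $D$ is bounded away from zero on the given set. The only addition is your (harmless, and arguably tidier) treatment of continuity in $x$ and the explicit quotient estimate, which the paper leaves implicit.
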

\begin{proof}
Denote by
\begin{align*}
N_U(x,t) &= \sup\Big\{ \int_{Q} \nabla U (x+y,t) \nabla h(y)\, dy, \ | \nabla h |_1 \leq 1 , h \text{ harmonic in } Q\Big\},
\\
D_U(x,t) &=  \int_{x+Q}
|\nabla U (y,t)|_1 dy,
\end{align*}
and by
\[
|\nabla u|_1 = \Big| \frac{\partial u}{ \partial x_1} \Big| + \Big| \frac{\partial u}{ \partial x_2} \Big|,
\qquad
|\nabla u|_2 = \sqrt{\Big( \frac{\partial u}{ \partial x_1} \Big)^2+
\Big( \frac{\partial u}{ \partial x_2} \Big)^2},
\]
the seminorm $|\cdot |_1$ and, respectively,  $|\cdot |_2$, then
\(
\frac{|\nabla u|_1}{\sqrt{2}} \leq |\nabla u|_2 \leq |\nabla u|_1 .
\)
Let $U_n \to  U$ in $L^2(0,T;V)$. By definition, for a.e.\ $t\in (0,T)$,
\begin{equation*}
\int_{{{\Omega}}} | \nabla U - \nabla U_n |^2_2 \, dy \to 0  ,
\end{equation*}
and hence
\begin{equation}\label{eq:disDu2}
\frac{1}{2\cdot|{{\Omega}}|} \Big(\int_{{{\Omega}}} | \nabla U - \nabla U_n |_1 \, dy \Big)^2 \leq
\int_{{{\Omega}}} \Big( \frac{| \nabla U - \nabla U_n |_1}{\sqrt{2}}\Big)^2 \, dy \leq
\int_{{{\Omega}}} | \nabla U - \nabla U_n |_2^2 \, dy
\to 0  .
\end{equation}
As a direct consequence,
\begin{equation}\label{eq:conv_denomin}
D_{U_n}(x,t) \to D_U(x,t) >0, \qquad \text{for a.e.\ $(x,t)\colon \int_{x+Q}  |\nabla U (y,t)|_1 dy > 0$}.
\end{equation}
For what concerns $N_U$, if $| \nabla h |_1 \leq 1$ then we get $| \nabla h |_\infty \leq 1$.
Then
\begin{multline*}
\Big|
\int_{Q}  \nabla U_1 (x+ y,t) \nabla h(y)\, dy -
\int_{Q}  \nabla U_2 (x+ y,t) \nabla h(y)\, dy
\Big|\\
\begin{aligned}
&
\leq
\int_{Q}  | \nabla U_1 (x+y,t)  -\nabla U_2 (x+y,t)  \,\nabla h(y)|\, dy
\\
&
\leq
\| \nabla U_1 (x+\cdot,t)  -\nabla U_2 (x+\cdot,t)  \|_1 \,  \|\nabla h \|_\infty
\\
&
\leq
\int_{x+ Q}  | \nabla U_1 (y,t)  -\nabla U_2 (y,t)  |_1\, dy
\end{aligned}
\end{multline*}
By \eqref{eq:disDu2},
\(
N_{U_n}(x,t) \to N_U(x,t)
\), which concludes the proof together with \eqref{eq:conv_denomin}.
\end{proof}

\begin{lemma}\label{lem:a_conv}
Let $u_n\to u$ in $V$. For any $w\in C^1({{{\Omega}}} ; \mathbb{R})$ and $t\in (0,T)$,
\[
\lim_{n\to\infty} \Big(
\int_{ {{{\Omega}}}} g(R_{Q,u_n}) \nabla u_n \nabla w \,dx
- \int_{ {{{\Omega}}}} g(R_{Q,u}) \nabla u \nabla w \,dx\Big)= 0.
\]
\end{lemma}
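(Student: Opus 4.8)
The plan is to write
\[
g(R_{Q,u_n})\nabla u_n-g(R_{Q,u})\nabla u=g(R_{Q,u_n})\bigl(\nabla u_n-\nabla u\bigr)+\bigl(g(R_{Q,u_n})-g(R_{Q,u})\bigr)\nabla u
\]
and to estimate each term after pairing with $\nabla w$. For the first term I would use only the pointwise bounds $0<\epsilon\le g(R_{Q,u_n})\le 1$ together with $\nabla w\in L^\infty(\Omega)$ and $|\Omega|<\infty$, so that by the Cauchy--Schwarz inequality
\[
\Bigl|\int_{\Omega} g(R_{Q,u_n})\,(\nabla u_n-\nabla u)\cdot\nabla w\,dx\Bigr|\le \|\nabla w\|_{L^\infty(\Omega)}\,|\Omega|^{1/2}\,\|\nabla u_n-\nabla u\|_{L^2(\Omega)}\longrightarrow 0,
\]
since $u_n\to u$ in $V$. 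This reduces the lemma to proving
\[
\int_{\Omega}\bigl(g(R_{Q,u_n})-g(R_{Q,u})\bigr)\,\nabla u\cdot\nabla w\,dx\longrightarrow 0.
\]

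For this I would argue by dominated convergence. Since $g$ is bounded, the integrands are dominated uniformly in $n$ by $2\|\nabla w\|_{L^\infty(\Omega)}\,|\nabla u|\in L^1(\Omega)$ (here $\nabla u\in L^2(\Omega)$ and $|\Omega|<\infty$), so it suffices to show that the integrand tends to $0$ for a.e.\ $x\in\Omega$. I would split $\Omega$ according to the continuous function $D_u(x):=\int_{x+Q}|\nabla u(y)|_1\,dy$. On the open set $\{D_u>0\}$, Lemma~\ref{lem:continuityR_Q} gives $R_{Q,u_n}(x)\to R_{Q,u}(x)$, and then continuity of $g$ gives $g(R_{Q,u_n}(x))\to g(R_{Q,u}(x))$, so the integrand tends to $0$ there. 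On the closed set $Z:=\{D_u=0\}$ the integrand vanishes, because $\nabla u=0$ a.e.\ on $Z$; this last fact is what neutralizes the possibly non-convergent behaviour of $R_{Q,u_n}$ on $Z$.

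The only genuine point — and the one I expect to be the main obstacle, though a mild one — is the claim $\nabla u=0$ a.e.\ on $Z$. Since $\Omega$ is open and $Q$ is an open box containing the origin, every $x_0\in Z\subseteq\Omega$ admits $s>0$ with $B(x_0,s)\subseteq (x_0+Q)\cap\Omega$; then $D_u(x_0)=0$ forces $\nabla u\equiv 0$ a.e.\ on $B(x_0,s)$, so $x_0$ cannot be a Lebesgue density point of $\{\nabla u\ne 0\}$. By the Lebesgue density theorem almost every point of the measurable set $\{\nabla u\ne 0\}$ is a density point of it, hence $|Z\cap\{\nabla u\ne 0\}|=0$, i.e.\ $\nabla u=0$ a.e.\ on $Z$. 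Feeding the two regions into the dominated convergence theorem yields the second limit, and together with the first estimate this proves the lemma. (An equivalent route avoiding the density argument: fix $\eta>0$, use the uniform bound $\sup_x|D_{u_n}(x)-D_u(x)|\le \sqrt{2}\,|\Omega|^{1/2}\|\nabla u_n-\nabla u\|_{L^2}\to 0$ to obtain a lower bound for $D_{u_n}$ on $\{D_u>\eta\}$ and hence convergence of $R_{Q,u_n}$ there, while on $\{D_u\le\eta\}$ the contribution is $\int_{\{0<D_u\le\eta\}}|\nabla u\cdot\nabla w|\,dx$, which tends to $0$ as $\eta\downarrow0$ by monotone convergence once one has noted, as above, that $\nabla u=0$ a.e.\ on $\{D_u=0\}$.)
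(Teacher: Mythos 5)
Your proof is correct, and while it opens with the same decomposition as the paper (split off $g(R_{Q,u_n})(\nabla u_n-\nabla u)\cdot\nabla w$, kill it by boundedness of $g$ and strong convergence in $V$), you handle the remaining term $\int_\Omega (g(R_{Q,u_n})-g(R_{Q,u}))\,\nabla u\cdot\nabla w\,dx$ by a genuinely different route. The paper isolates the region $\widetilde{\Omega}^\epsilon$ where $D_u(x)=\int_{x+Q}|\nabla u|_1\,dy$ is below a threshold, controls its contribution by $\epsilon$ via the Vitali covering argument of Corollary~\ref{cor:uniform_N0} (which is where Assumption~1 on $|\Omega\cap\{x+Q/3\}|$ enters), and applies dominated convergence only on the complement, where Lemma~\ref{lem:continuityR_Q} guarantees pointwise convergence of $R_{Q,u_n}$. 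You instead observe that on the exceptional set $Z=\{D_u=0\}$ the integrand is identically zero because $\nabla u=0$ a.e.\ there — justified correctly by the fact that each $x_0\in Z$ has a ball $B(x_0,s)\subseteq(x_0+Q)\cap\Omega$ on which $\nabla u$ vanishes a.e., plus the Lebesgue density theorem (a Lindel\"of countable-subcover argument would do equally well) — so dominated convergence applies on all of $\Omega$ at once. This buys a real simplification: the covering corollary, and hence Assumption~1, are not needed for this lemma at all, and no $\epsilon$-bookkeeping is required; the paper's version, on the other hand, controls the contribution of the whole set where $D_u$ is merely \emph{small} (not just zero), which is the kind of quantitative estimate one would want if $\nabla u$ did not conveniently vanish on $\{D_u=0\}$. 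Your parenthetical alternative, using the uniform bound $\sup_x|D_{u_n}(x)-D_u(x)|\le\sqrt{2}\,|\Omega|^{1/2}\|\nabla u_n-\nabla u\|_{L^2}$ together with monotone convergence on $\{0<D_u\le\eta\}$, is in spirit closer to the paper's excision strategy but still dispenses with the covering lemma. Both of your arguments are sound.
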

\begin{proof}
Note that
\begin{multline}\label{eq:lemg1}
\Big|\int_{ {{{\Omega}}}} g(R_{Q,u_n}) \nabla u_n \nabla w \,dx
- \int_{ {{{\Omega}}}} g(R_{Q,u}) \nabla u \nabla w \,dx\Big|
\\
\begin{aligned}
& \leq \int_{ {{{\Omega}}}} | g(R_{Q,u_n}) |\, | (\nabla u_n - \nabla u ) \nabla w| \,dx
\\
& \qquad
+
\int_{ {{{\Omega}}}} | g(R_{Q,u_n}) - g(R_{Q,u})|\, | \nabla u \nabla w | \,dx .
\end{aligned}
\end{multline}
For what concerns the RHS of \eqref{eq:lemg1}, the first term vanishes as $n$ goes to infinity since $g$ is bounded.
For the second term of  the RHS of \eqref{eq:lemg1}, we get
\begin{multline}\label{eq:lemg2}
\int_{{{{\Omega}}}} | g(R_{Q,u_n})  - g(R_{Q,u})|\, | \nabla u \nabla w | \,dx
= \int_{\widetilde{{{\Omega}}}^\epsilon} | g(R_{Q,u_n}) - g(R_{Q,u})|\, | \nabla u \nabla w | \,dx
\\
+
\int_{{{{\Omega}}} \setminus \widetilde{{{\Omega}}}^\epsilon} | g(R_{Q,u_n}) - g(R_{Q,u})|\, | \nabla u \nabla w | \,dx
\end{multline}
where,
\[
\widetilde{{{\Omega}}}^\epsilon = \Big\{x \colon \int_{x+Q} |\nabla u |_1 dy  < \frac{\epsilon |Q| q_{{\Omega}} }{g(0) K 3^2 |{{\Omega}}|} \leq
\frac{\epsilon }{g(0) K N_0}
\Big\},
\]
$K$ is such that $\max(|\tfrac{\partial w}{x_1}|,|\tfrac{\partial w}{x_2}|) \leq K$, and
$N_0$ is defined in Corollary~\ref{cor:uniform_N0}.
The first term of the RHS of \eqref{eq:lemg2} is uniformly bounded (in $n$) by $\epsilon $: by Corollary~\ref{cor:uniform_N0}
\[
\begin{aligned}
\int_{\widetilde{{{\Omega}}}^\epsilon} | g(R_{Q,u_n}) - g(R_{Q,u})|\, | \nabla u \nabla w | \,dy
&\leq
g(0) K \int_{\widetilde{{{\Omega}}}^\epsilon }  | \nabla u |_1 \,dy\\
&\leq
g(0) K \int_{ \cup_{j=1}^{N_0} \{x_j+Q\} } | \nabla u |_1 \,dy\\
&\leq
g(0) K \sum_{j=1}^{N_0} \int_{x_j+Q } | \nabla u |_1 \,dy\\
&\leq
g(0) K N_0 \int_{x_j+Q } | \nabla u |_1 \,dy\\
& \leq \epsilon .
\end{aligned}
\]
The second term of the RHS of \eqref{eq:lemg2} vanishes as a consequence of the Dominated Convergence Theorem. In fact,
$| g(R_{Q,u_n}) - g(R_{Q,u}) \nabla u \nabla w| \leq g(0) |\nabla u \nabla w|\in L^1 $;
and $g(R_{Q,u_n}) \to g(R_{Q,u})$ on ${{{\Omega}}} \setminus \widetilde{{{\Omega}}}^\epsilon$  by Lemma~\ref{lem:continuityR_Q}.
\end{proof}

Now we prove that the limit $u$ is a variational solution of (\ref{eq.nonlinear}).

\begin{lemma}[Existence]
For any $u_0 \in V$,
there exists $u\in L^2 (0, T ; V)$ with $\frac{\partial u}{\partial t} \in L^2 (0, T ; V^*)$ such that
\(u(x,0)=u_0(x)\) on $\Omega$,
$\frac{\partial u}{\partial \vec{n}} =0$ on $\Gamma \times (0,T)$ and
\[
\int_\Omega \frac{\partial u}{\partial t}  w \, dx = \int_{ {{{\Omega}}}} \mathrm{div} (g(R_{Q,u}) \nabla u ) w \,dx,  \qquad \forall w \in C^1_0(\Omega).
\]
\end{lemma}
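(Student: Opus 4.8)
The plan is to pass to the limit in the discrete variational identity \eqref{eq.linear.variational}, using the compactness obtained in Lemmas~\ref{lem:prefinale}--\ref{lem:StrongConv} together with the stability of the nonlocal kernel from Lemma~\ref{lem:a_conv}. Introduce the delayed step function $\bar{u}^{(N)}_{-}(t)=u_{i-1}$ for $t\in(t_{i-1},t_i]$. Since $\partial u^{(N)}/\partial t=\delta u_i$ and $\bar{u}^{(N)}=u_i$ on $(t_{i-1},t_i)$, identity \eqref{eq.linear.variational} reads, for a.e.\ $t\in(0,T)$ and every $v\in V$,
\[
\Big\langle \tfrac{\partial u^{(N)}}{\partial t},v\Big\rangle+\big(g(R_{Q,\bar{u}^{(N)}_{-}})\nabla\bar{u}^{(N)},\nabla v\big)=0 .
\]
Fixing $w\in C^1_0(\Omega)$ and $\varphi\in C^\infty_c(0,T)$, I multiply by $\varphi(t)$, integrate over $(0,T)$, and pass to the limit termwise.

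The first term converges since $\partial u^{(N)}/\partial t\rightharpoonup \partial u/\partial t$ in $L^2(0,T;V^*)$ (Lemma~\ref{lem:prefinale}) and $\varphi w\in L^2(0,T;V)$. The second term is the core of the proof. Because $\bar{u}^{(N)}_{-}$ is a time translate of $\bar{u}^{(N)}$, the strong convergence $\bar{u}^{(N)}\to u$ in $L^2(0,T;V)$ (Lemma~\ref{lem:StrongConv}) and the continuity of translations in $L^2(0,T;V)$ give $\bar{u}^{(N)}_{-}\to u$ and $\bar{u}^{(N)}-\bar{u}^{(N)}_{-}\to 0$ in $L^2(0,T;V)$. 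Passing to a subsequence, for a.e.\ $t$ one has $\bar{u}^{(N)}_{-}(t)\to u(t)$ in $V$, $\|\bar{u}^{(N)}(t)-\bar{u}^{(N)}_{-}(t)\|_1\to 0$, and a bound $\|\bar{u}^{(N)}(t)\|_1\le h(t)$ with $h\in L^2(0,T)$. For such a fixed $t$, Lemma~\ref{lem:a_conv} applied with $u_n=\bar{u}^{(N)}_{-}(t)\to u(t)$ gives $\int_\Omega g(R_{Q,\bar{u}^{(N)}_{-}})\nabla\bar{u}^{(N)}_{-}\,\nabla w\,dx\to\int_\Omega g(R_{Q,u})\nabla u\,\nabla w\,dx$; adding the error $\int_\Omega g(R_{Q,\bar{u}^{(N)}_{-}})(\nabla\bar{u}^{(N)}-\nabla\bar{u}^{(N)}_{-})\nabla w\,dx=O(\|\bar{u}^{(N)}(t)-\bar{u}^{(N)}_{-}(t)\|_1)$ (here $|g|\le 1$), the same limit holds with $\nabla\bar{u}^{(N)}$ in place of $\nabla\bar{u}^{(N)}_{-}$. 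The integrand is dominated by $\|\nabla w\|_\infty\sqrt{|\Omega|}\,\|\varphi\|_\infty\,h(t)\in L^1(0,T)$, so dominated convergence yields $\int_0^T\big(g(R_{Q,\bar{u}^{(N)}_{-}})\nabla\bar{u}^{(N)},\nabla w\big)\varphi\,dt\to\int_0^T\big(g(R_{Q,u})\nabla u,\nabla w\big)\varphi\,dt$.

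Collecting the two limits and using that $\varphi\in C^\infty_c(0,T)$ is arbitrary, I obtain $\langle \partial u/\partial t,w\rangle+(g(R_{Q,u})\nabla u,\nabla w)=0$ for a.e.\ $t$ and every $w\in C^1_0(\Omega)$; integrating by parts in space (the boundary term vanishing since $w$ has compact support) this is exactly $\int_\Omega\frac{\partial u}{\partial t}w\,dx=\int_\Omega\mathrm{div}(g(R_{Q,u})\nabla u)w\,dx$. Both terms are continuous in $w$ for the $V$-norm (the second one because $|g|\le 1$) and $C^1(\overline\Omega)$ is dense in $V$, so the identity holds for every $w\in V$, which is the weak form of the homogeneous Neumann condition. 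For the initial datum, $u\in L^2(0,T;V)$ with $\partial u/\partial t\in L^2(0,T;V^*)$ implies $u\in C([0,T];L^2)$, so $u(\cdot,0)$ is meaningful; taking $\varphi\in C^1([0,T])$ with $\varphi(T)=0$, integrating the time term by parts, using $u^{(N)}(0)=u_0$, passing to the limit, and comparing with the integrated-by-parts weak formulation for $u$, one gets $(u(\cdot,0)-u_0,w)=0$ for all $w$, hence $u(\cdot,0)=u_0$.

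The crux — and the only genuinely delicate point — is the convergence of the nonlocal nonlinear term, because $U\mapsto g(R_{Q,U})$ is not continuous precisely where the local anisotropic total variation of $U$ degenerates (flat regions), so a naive dominated-convergence argument fails. This is exactly what Lemma~\ref{lem:a_conv} is built to overcome, splitting $\Omega$ into the small-variation set $\widetilde\Omega^\epsilon$ — where the Vitali-type covering of Corollary~\ref{cor:uniform_N0} bounds the contribution by $\epsilon$ uniformly in $n$ — and its complement, on which $R_{Q,\cdot}$ is continuous by Lemma~\ref{lem:continuityR_Q} and dominated convergence applies. The remaining, routine points are identifying $\bar{u}^{(N)}_{-}$ as a convergent time translate and producing an $L^2(0,T)$ dominating function to upgrade a.e.-in-time convergence to convergence of the time integrals.
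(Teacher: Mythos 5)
Your proof follows the same route as the paper's: pass to the limit in the Rothe variational identity \eqref{eq.linear.variational} with separated test functions $\phi(t)w(x)$, handling the time term by the weak convergence of $\partial u^{(N)}/\partial t$ from Lemma~\ref{lem:prefinale} and the nonlocal term by the strong convergence of Lemma~\ref{lem:StrongConv} combined with Lemma~\ref{lem:a_conv}. You are in fact more careful than the printed proof --- the delayed kernel $R_{Q,\bar u^{(N)}_{-}}$ versus $\nabla\bar u^{(N)}$, the extraction of an a.e.-in-$t$ subsequence with an $L^2(0,T)$ dominating function to upgrade the pointwise Lemma~\ref{lem:a_conv} to convergence of the time integrals, and the recovery of the initial datum are all glossed over there --- and the only blemish is the claim that density of $C^1(\overline\Omega)$ in $V$ extends an identity proved for $w\in C^1_0(\Omega)$ to all of $V$ (it does not, since $C^1_0(\Omega)$ is not dense in $H^1(\Omega)$; but the same limit argument runs verbatim for $w\in C^1(\overline\Omega)$, which is all Lemma~\ref{lem:a_conv} requires, so the Neumann condition is still obtained).
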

\begin{proof}
By Lemma~\ref{lem:prefinale} and Lemma~\ref{lem:StrongConv}, there exists a
sequence ${u}^{(N)}$ such that
\[
{u}^{(N)} \to u \text{ in } L^2 (0, T ; V),
\qquad
\frac{\partial u^{(N)}}{\partial t} \rightharpoonup \frac{\partial u}{\partial t}
\text{ in } L^2 (0, T ; V^*)
.
\]
Let $\phi \in C^{\infty}_c(0, T )$ be a real-valued test function and
$ w \in C^1_0(\Omega)$.
Taking $v(x,t) = \phi(t) w(x)$ as a test function and integrating the result with respect to $t$,
we find that
\[
\int_0^T \Big( \int_\Omega
\frac{\partial u^{(N)}}{\partial t} v(x,t) \, dx + \int_\Omega
g(R_{Q,u_N}) \nabla u^{(N)} \nabla v(x,t) \, dx \Big) \, dt= 0.
\]
We take the limit of this equation as $N \to\infty$. Since the function $t \to \phi(t)w$ belongs
to $L^2(0, T ; V)$, we have
\begin{equation*}
\int_0^T \int_\Omega \frac{\partial u^{(N)}}{\partial t}  v(x,t) \, dx \, dt
\longrightarrow
\int_0^T \int_\Omega \frac{\partial u}{\partial t}  v(x,t) \, dx \, dt .
\end{equation*}
Moreover, Lemma~\ref{lem:a_conv} shows that
\[
\int_0^T \Big( \int_{ {{{\Omega}}}} g(R_{Q,u_N}) \nabla u^{(N)} \nabla w \,dx
\Big) \phi(t) \,dt
\to \int_0^T \Big( \int_{ {{{\Omega}}}} g(R_{Q,u}) \nabla u \nabla w \,dx \Big) \phi(t) \,dt
\]
It therefore follows that $u$ satisfies
\[
\int_0^T \phi(t) \Big( \int_\Omega \frac{\partial u^{(N)}}{\partial t}  w \, dx +
\int_{ {{{\Omega}}}} g(R_{Q,u}) \nabla u \nabla w \,dx \Big) \, dt =
0, \qquad \forall \phi \in C^{\infty}_c(0, T ),
\]
and hence, for almost every $t\in (0,T)$,
\[
\int_\Omega \frac{\partial u}{\partial t}  w \, dx = \int_{ {{{\Omega}}}} \mathrm{div} (g(R_{Q,u}) \nabla u ) w \,dx,  \qquad \forall w \in C^1_0(\Omega).
\]
\end{proof}
\section{Numerical approximation}
\label{sec:NR}

We introduce here a numerical scheme for the one-dimensional and the two-dimensional case, that is for $1D$ signal and for the gray level images.
We point out that  a digital signal/image is usually defined on a uniform subdivision of an interval or a  rectangular domain. While for the  one-dimensional case we consider a finite differences approach, for the $2D$ spatial problem we have to couple different numerical  approaches in order to
estimates the function $R_{Q,u}$ and the corresponding diffusive coefficient.
\subsection{$1D$ problem}

We approximate the $1D$ non linear diffusion equation (\ref{eq_nonlocal_1D}), for $x\in [0,L]$, $t\in [0,T]$, coupled with homogeneous Neumann conditions for $x=0$, $x=L$, using a finite difference scheme. For convenience, we will use a uniform grid, with grid
spacing $h=L/N$, $N>1$ integer. If we wish to refer to one of the points in the spatial grid, we shall call the points $x_i$, $i=0,\ldots ,N$, where $x_i=ih$,  obtaining  the corresponding partition
$0 = x_{0}<x_1< \ldots < x_{N} = L$. Likewise, we discretize the time domain $[0,T]$ similarly by place a grid on the temporal axis with grid spacing $\tau =T/M$ and time points $t_k=k\tau$, $k=0,\ldots ,M$. We will define $u_{i}^{k}$ to be a function defined at the point
$(ih,k\tau )$, the function $u_{i}^{k}$ will be our approximation to the solution of the
diffusion problem at the same point $(ih,k\tau )$.\\
For simplicity we choose the parameter $\delta$ (the length of the window for the non local term) as  $\delta = lh$, where $l \in \mathbb{N}$.
We approximate the local variation $LV_{\left[ x_{i},x_{i}+\delta\right] }\left( u\right)$ by $LV_{i}= \lvert u_{i+l}-u_{i} \rvert$, and
we define $TV_{i}=\sum_{j=0}^{l-1} \lvert u_{i+j+1}-u_{i+j} \rvert$ as a discretization of the
total variation  $TV_{\left[ x_{i},x_{i}+\delta\right] }\left( u\right)$.
We also define the following quantities where the function $g\left( s\right)$ is the {edge-stopping},
\[
\begin{array}{l}
\displaystyle
g_{i}^{k}=g\left( \dfrac{LV_{i}}{\varepsilon + TV_{i}} \right)\\
\\
\displaystyle
g_{i+\frac{1}{2}}^{k}=\dfrac{g_{i}^{k}+g_{i+1}^{k}}{2}\\
\\
\displaystyle
\partial_{x}u_{i}^{k+1} = \dfrac{u_{i+1}^{k+1}-u_{i}^{k+1}}{h} \\
\\
\displaystyle
\phi_{i+\frac{1}{2}}^{k}= g_{i+\frac{1}{2}}^{k} \partial_{x}u_{i}^{k+1}\\
\\
\displaystyle
\left[ \partial_{x}\left( g \partial_{x}u \right) \right] _{i}^{k} = \dfrac{\phi_{i+\frac{1}{2}}^{k}-\phi_{i-\frac{1}{2}}^{k}}{h} = \\
\\
\displaystyle
~~~~~~~=\dfrac{g_{i-1}^{k}+g_{i}^{k}}{2h^{2}}u_{i-1}^{k+1} - \dfrac{g_{i-1}^{k}+2g_{i}^{k}+g_{i+1}^{k}}{2h^{2}}u_{i}^{k+1} + \dfrac{g_{i}^{k}+g_{i+1}^{k}}{2h^{2}}u_{i+1}^{k+1} = \\
\\
\displaystyle
~~~~~~~= \beta_{i}^{k} u_{i-1}^{k+1} - \alpha_{i}^{k} u_{i}^{k+1} + \gamma_{i}^{k}u_{i+1}^{k+1}
\end{array}
\]
Then we can state our semi-implicit numerical scheme
\[
\dfrac{U^{k+1}-U^{k}}{\tau} = A\left( U^{k} \right) U^{k+1}
\]
where $U^k$ is the vector of the values $u_i^k$, and the matrix $A\left( U^{k} \right)$ is defined as
\begin{equation*}
A(U^k)=
\begin{pmatrix}
\alpha_{1}^{k} & \gamma_{1}^{k} & & \\
\beta_{2}^{k} & \alpha_{2}^{k} & \gamma_{2}^{k} &  \\
 & \ddots & \ddots & \ddots & \\
 & & \beta_{N-1}^{k} & \alpha_{N-1}^{k} & \gamma_{N-1}^{k} & \\
 & & & \beta_{N}^{k} & \alpha_{N}^{k}
\end{pmatrix}
\end{equation*}
Then at each time step we have to numerically solve the following linear system,
\[
\left( I-\tau A\left( U^{k} \right) \right) U^{k+1} = U^{k}
\]
Let $B\left( U^{k} \right)$ the matrix  $\left( I-\tau A\left( u^{k} \right) \right)$, it is easy to show that $B$
is a strictly diagonally dominant matrix, then it is non singular.
\subsection{$1D$ Test}
In this numerical experiment  we consider a recorded calcium imaging data from a $3D$ cultures of cortical neurons. The sampling rate was  $65$Hz and the sampling  time interval was about $8$ seconds. The data was collected in the Department of Neuroscience and Brain Technologies of the Fondazione Istituto Italiano di Tecnologia. In Figure \ref{fig:Rec_calcio}  we show a typical trace of the calcium signal together with different
smoothed signals at different time $T$.
\begin{figure}[htb]
\begin{center}
\includegraphics[width=0.35\textwidth]{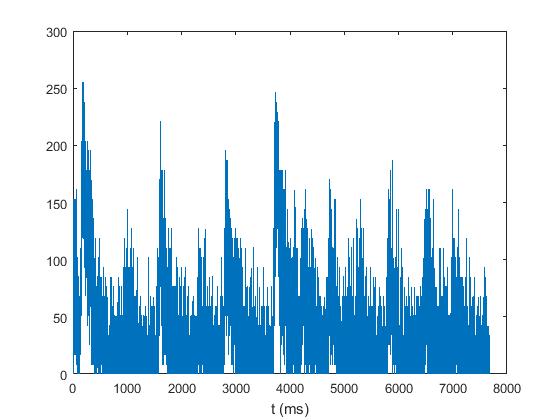}%
\includegraphics[width=0.35\textwidth]{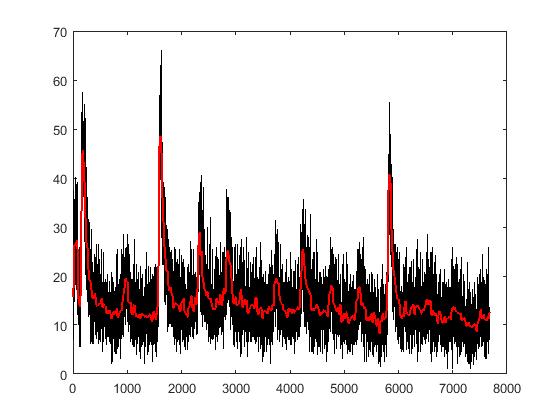}

\includegraphics[width=0.35\textwidth]{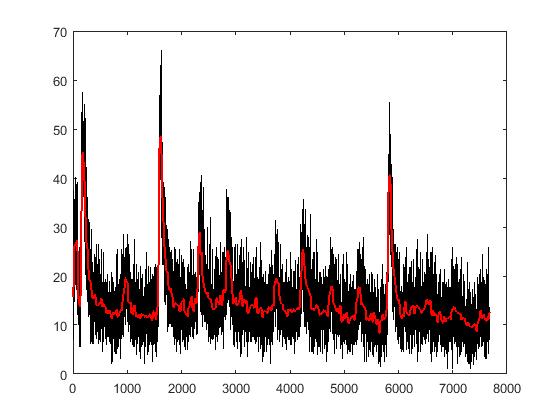}%
\includegraphics[width=0.35\textwidth]{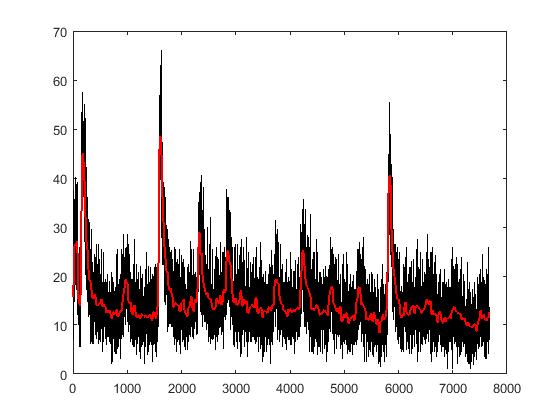} 
\end{center}
\caption{Reconstruction with the non linear, non local diffusion equation (\ref{eq_nonlocal_1D}) of a
Calcium trace. The original signal was obtained in the IIT lab based in Genova (Italy). First line, on the left the original signal, on the right
the solution for $T=300$. Second line, on the left the solution for $T=400$, on the right the solution for $T=500$. In all numerical experiments, 
$\delta =20$, $\tau =0.1$.} \label{fig:Rec_calcio}
\end{figure}
In the test we used $\tau =0.1$, $\delta =20$, and $h=1/65$, the initial data $U^0$ is obtained by the convolution of the original signal with a Gaussian filter with $\sigma=0.01$. 
\subsection{$2D$ problem}

Without loss of generality we can choose a square domain $(x,y)\in [1,L]\times [1,L]$, with $L$ a positive integer which is the number of pixels for each row (or column) of the image. Moreover, we can fix  the  grid spacing $h_x=h_y=1$ because it represents the distance between two adjacent pixels.
Also for the time interval $[0,T]$ we use an uniform grid with time spacing $\tau$, let $\mathbf{j} = (j_1,j_2)$, the node $(i,\mathbf{j})$
corresponds to the point $( i\tau , j_1 h_x, j_2 h_y )=( i\tau , j_1, j_2 )$.  We denote by $u_{i,(j_1+1,j_2)}$ the approximation of the solution $u$ in the 
node $(i,\mathbf{j})$.  Each  component of the gradient vector and the divergence of a vector will be approximated  with
central differences formula,
\begin{align*}
&\nabla u_{i,\mathbf{j}} =
\begin{pmatrix}
(u_{i,(j_1+1,j_2)}- u_{i,(j_1-1,j_2)})/2
\\
(u_{i,(j_1,j_2+1)}- u_{i,(j_1,j_2-1)})/2
\end{pmatrix}
, \\
&
\mathrm{div} (v_{i,\mathbf{j}},w_{i,\mathbf{j}}) =
\frac{v_{i,(j_1+1,j_2)}- v_{i,(j_1-1,j_2)}}{2}
+
\frac{w_{i,(j_1,j_2+1)}- w_{i,(j_1,j_2-1)}}{2}.
\end{align*}
The main effort, given a symmetric rectangular $Q= (-q_1,+q_1)\times (-q_2,+q_2)$,
is the computation of
\[
R_{Q,u_{i-1,\mathbf{j}}} = \frac{  \sup\{ \int_{x+Q} \nabla u (y,t) \nabla h\, dy, \ | \nabla h |_1 \leq 1 , h \text{ harmonic on }Q\}
 }{ \epsilon +  \int_{x+Q}
|\nabla u (y,t)|_1 dy }
.
\]
\noindent
\textbf{Step 1:} computation of $\int_{x+Q}
|\nabla u (y,t)|_1 dy $.
\begin{itemize}
\item We compute the total variation $TV_{i,(j_1+0.5,j_2+0.5)}$ in each square by taking the $\mathcal{Q}_1$ finite element  approximation $P(x_1,x_2)$
defined by its values at the corner nodes
$u_{i,(j_1,j_2)},u_{i,(j_1+1,j_2)},u_{i,(j_1+1,j_2+1)},u_{i,(j_1,j_2+1)}$. We may then exactly compute
\[
TV_{i,(j_1+0.5,j_2+0.5)} = \int_{0}^1\int_0^1
|\nabla P(x_1,x_2)|_1 dx_1\,dx_2
\]
as a function of the corner values;
\item with a pre-computed filter, we sum values of the vertices of each of the $4q_1q_2$ squares in the neighborhood
$Q$ of $\mathbf{j}$:
\[
TV_{Q,i,\mathbf{j}} = \sum_{k_1=j_1-q_1}^{q_1-1} \sum_{k_2=j_2-q_2}^{q_2-1}
TV_{i,(k_1+0.5,k_2+0.5)} .
\]
\end{itemize}
\noindent
\textbf{Step 2:} computation of
\begin{equation}\label{eq:numer_TBC}
\sup\{ \int_{x+Q} \nabla u (y,t) \nabla h\, dy, \ | \nabla h |_1 \leq 1 , h \text{ harmonic on }Q\} .
\end{equation}
Since $\mathrm{div} (\nabla h ) =0 $, in this case we have
\[
\int_{x+Q} \nabla u (y,t) \nabla h\, dy = \int_{x+\partial Q} u(y,t)  (\nabla h \cdot \vec{n}) \, ds ,
\]
and hence \eqref{eq:numer_TBC} depends on $u$ only through its values at $x+\partial Q$ (as in $1D$).
To compute
\[
\sup\{ \int_{x+\partial Q} u(s,t)  (\nabla h \cdot \vec{n}) \, ds, \ | \nabla h |_1 \leq 1 , h \text{ harmonic on }Q\} ,
\]
we approximate the set $\{h \text{ harmonic on }Q, | \nabla h |_1 \leq 1\}$ with a spectral decomposition using suitable eigenfunctions
in the following way:
\begin{itemize}
\item we map $Q$ on the square $S=[0,1]\times[0,1]$; up to a constant,
the following functions form a base for $\{h \text{ harmonic on } S\}$
\begin{align*}
&x, y, xy, \\
&
\frac{\sin(k \pi x) \sinh(k \pi ( 1 - y))}{\cosh(\pi k)} ,
\frac{\sin(k \pi y) \sinh(k \pi x)}{\cosh(\pi k)},\\
&
\frac{\sin(k \pi x) \sinh(k \pi y)}{\cosh(\pi k)},
\frac{\sin(k \pi y) \sinh(k \pi ( 1 - x))}{\cosh(\pi k)};
\end{align*}
\item we choose $M$ and we approximate the base by taking $k\leq M$,
the first $M$ modes and $x, y, xy$;
\item we compute the approximate base $\mathcal{H}= \{f_h, h=1,\ldots 4M+3\}$ on $Q$, with a linear transformation of $x$ e $y$ that preserves harmonicity;
\item we compute the gradient $\nabla f_h$ of each element of $\mathcal{H}$;
\item we solve the linear problem
\begin{multline*}
\max_{a_1,\ldots a_{4M+3}}  \sum_{h=1}^{4M+3} a_h \int_{x+\partial Q} u(s,t)  (\nabla f_h \cdot \vec{n}) \, ds
\\
\text{subject to }  \Big| \sum_{h=1}^{4M+3} a_h \nabla f_h  \Big|_1 \leq 1 , \text{ uniformly on $Q$},
\end{multline*}
by noticing that each component of $\sum_{h=1}^{4M+3} a_h \nabla f_h $ is still an harmonic function, and hence it attains its
maximum on $\partial Q$. With a very fine mesh $\{y_1, \ldots, y_L\}$ on the boundary $\partial Q$, the problem
\eqref{eq:numer_TBC} is hence well
approximated by the linear problem
\begin{align*}
\max_{a_1,\ldots a_{4M+3}}
&
\sum_{h=1}^{4M+3} a_h
\int_{x+\partial Q} u(s,t)  (\nabla f_h \cdot \vec{n}) \, ds
\\
&\text{subject to }
\sum_{h=1}^{4M+3} a_h \big( \pm \tfrac{\partial f_h (y_l) }{\partial x} \pm \tfrac{\partial f_h (y_l)}{\partial y} \big) \leq 1 ,
\\
&\qquad \text{ for any $l = 1,\ldots, L$}.
\end{align*}
This linear problem has $4M+3$ unknown and $4 L$ constrains and it
is solved independently for each node $(i,\mathbf{j})$. We have parallelized
this operation, once we have preallocated the quantities involving $\nabla f_h(y_l)$;
\item furthermore,
$u(s,t)$ is evaluated with a linear interpolation on the values $u_{i,\mathbf{j}} $
of the nodes $\mathbf{j} \in \partial Q$,
since the integral is made on the boundary of an element of
$\mathcal{Q}_1$. Therefore, there exist suitable constants such that
\[
\int_{x+\partial Q} u(y,t)  (\nabla f_h \cdot \vec{n}) \, ds =
\sum_{\mathbf{j} \in \partial Q} u_{i,\mathbf{j}} H_{h,\mathbf{j}}.
\]
Again, we have calculated the quantities $H_{h,\mathbf{j}}$ at the beginning of the code.
\end{itemize}
\begin{figure}[tbhp]
\begin{center}
\includegraphics[width=0.45\textwidth]{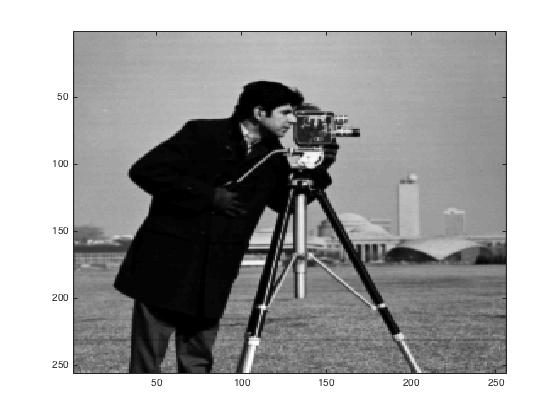}
\includegraphics[width=0.45\textwidth]{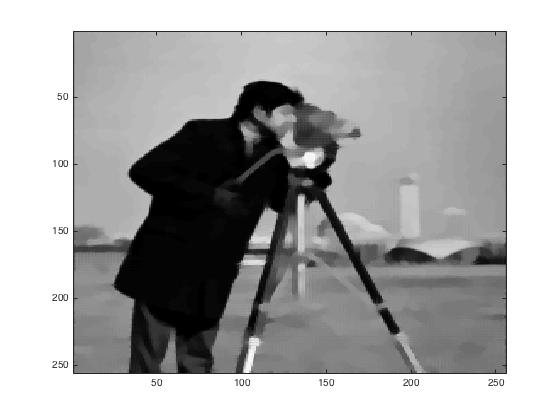}
\\
\includegraphics[width=0.45\textwidth]{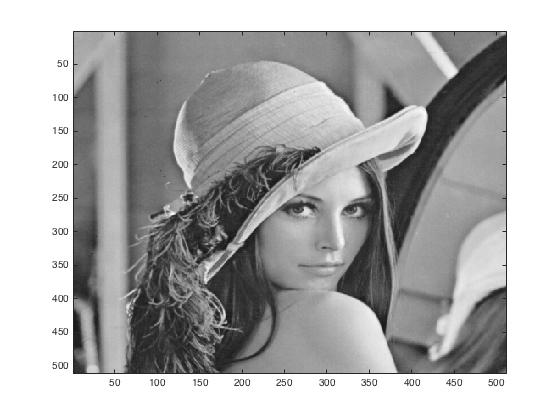}
\includegraphics[width=0.45\textwidth]{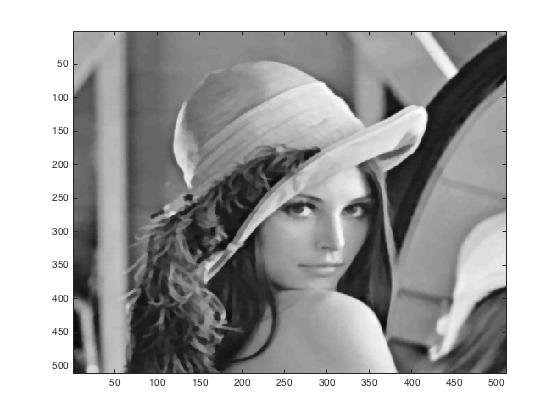}
\end{center}
\caption{Numerical solutions of equation \eqref{eq.linear}: the left
  plot refers to initial value, the right one is
  obtained after $300$ iterations. Resolution: top $126 \times 126 $ pixels, bottom $512 \times 512 $ pixels.
$Q = (-2,+2)^2$.}
\label{Fig:2D_photo-lena}
\end{figure}
In Figure~\ref{Fig:2D_photo-lena} two numerical simulations are performed. In order to avoid the solution of a linear system, as in the $1D$ case, we used  to advance in time the classical forward Euler scheme. We have therefore experimentally found a condition of stability by selecting an appropriate value
for the time spacing $\tau$. In both the cases, we have used $q_1=q_2=2$, $M=3$, $L=400$ with
Dual-Simplex Algorithm. The details under the magnitude of $Q$ are treated as noise,
and smoothed, while the edges are clearly magnified.
\section{Conclusion}
Image denoising/smoothing is one of basic issues in image processing. It plays a key preliminary step in many computer based
vision systems, but also it is  a starting point towards more  complex tasks.
Since image noise removal represents a relevant issue in various image analysis and computer
vision problems, it is a challenge to preserve the essential image features, such as edges and other
sharp structures during the smoothing process. The feature preserving image noise reduction still represents a challenging image
processing task. In this paper we propose a new method based on a non linear and non local diffusion equation.
The new approach has already been successfully applied in the analysis of membrane potentials in a neural network \cite{ANN15}, and for 
data analysis of recorded calcium signals in a $3D$ culture cells \cite{Difato}: these  denoising experiments  provided very encouraging results.
Here we focused on the mathematical analysis of the model and its numerical approximation. In particular, we provided an existence theorem for the variational solution and a numerical scheme both for the $1D$ and $2D$ case. \\
We observe that the uniqueness of the solution of the novel equation remains an open problem. Also the analysis of the stability and convergence of proposed numerical schemes should be completed. We have already developed some preliminary results that will be reported in a forthcoming paper. 
In particular, it is possible to show that the semi-implicit numerical scheme  satisfies the same discrete scale-space properties
as for  the Perona-Malik method. Finally, a more complete comparison with other methods has to be done, but this goes beyond the aims of this paper.
\section*{Acknowledgments}
The authors  are also extremely grateful to T. Nieus (UniMI) for providing them with
data of simulated membrane potentials, and to F. Difato (IIT-Ge) for providing the recorded calcium signals.

          \end{document}